\newcommand{\isopf}{isoperimetric function }
\newcommand{\isopi}{isoperimetric inequality }
\newcommand{\isopp}{isoperimetric problem }
\newcommand{\logi}{logistic }
\newcommand{\poinc}{Poincar\'e }
\newcommand{\N}{\mathbb{N}}
\newcommand{\R}{\mathbb{R}}
\newcommand{\rn}{{\mathbb{R}^N}}
\newcommand{\bd}{\partial}
\newcommand{\bm}[1]{{#1}^+}
\newcommand{\C}[1]{\textbf{C}_{#1}}
\newcommand{\cc}{{\sf{C}}}
\newcommand{\ee}{\text{e}}
\newcommand{\ep}{\varepsilon}
\newcommand{\fii}{\varphi}
\newcommand{\ff}{\,\textbf{f}\,}
\newcommand{\fg}{\,\textbf{h}\,}
\newcommand{\hh}{\mathscr{H}}
\newcommand{\I}[1]{{\rm{\bf{I}}}_{#1}}
\newcommand{\la}{\lambda}
\newcommand{\lab}{\labb_k}
\newcommand{\labb}{\overline{\boldsymbol\la}}
\newcommand{\lo}{{\boldsymbol\mu}}
\newcommand{\lon}{\lo^N}
\newcommand{\mun}{\mu^N}
\newcommand{\nne}{\nu_{\ep}}
\newcommand{\pp}{\alpha}
\newcommand{\qq}{\text{Q}}
\newcommand{\qn}[2]{\textsf{Q}^{#1}(#2)}
\newcommand{\sn}[1]{\mathbb{S}^{#1}}
\newcommand{\te}{\tau_{\ep}}
\newcommand{\vv}{\textsf{v}}
\newcommand{\va}{\textsf{Var}}
\newcommand{\var}[1]{{\rm{\sf{Var}}_{#1}}}
\newcommand{\vn}[1]{\textsf{V}_{#1}}
\newcommand{\wud}[1]{\text{W}^{1,2}_{#1}}
\newcommand{\wwb}{\mathscr{W}^b}
\newcommand{\zn}{Z_N}
\numberwithin{equation}{section}
\newtheorem{theorem}{Theorem}[section]
\newtheorem{proposition}[theorem]{Proposition}
\newtheorem{corollary}[theorem]{Corollary}
\newtheorem{remark}[theorem]{Remark}
\begin{document}
\title[]{Isoperimetry and stability of hyperplanes \\ for product probability measures}

\author[]{F. Barthe}
\author[]{C. Bianchini}
\author[]{A. Colesanti}

\address{{\sf{Franck Barthe}}: {Institut de Math\'ematiques de Toulouse,  Universit\'e Paul Sabatier, 31062 Toulouse cedex 9, France}}
\email{franck.barthe@math.univ-toulouse.fr}

\address{{\sf{Chiara Bianchini}}: Institut Elie Cartan, Universit\'e Henri Poincar\'e Nancy, Boulevard des Aiguillettes B.P. 70239, F-54506 Vandoeuvre-les-Nancy Cedex, France}
\email{cbianchini@math.unifi.it}

\address{{\sf{Andrea Colesanti}}: Dip.to di Matematica ``U. Dini'', Universit\`a degli Studi di Firenze, Viale Morgagni 67/A, 50134 Firenze, Italy}
\email{andrea.colesanti@math.unifi.it}

\address{}
\email{}
\date{}

\keywords{Isoperimetry, stability, product measures, Poincar\'e type inequalities}
\subjclass{42B25, 53A10, 60E15, 26A87}


\begin{abstract}
We investigate stationarity and stability of half-spaces as isoperimetric sets
for product probability measures, considering the cases of coordinate and non-coordinate half-spaces.
Moreover, we present several examples to which our results can be applied, with a particular emphasis on the logistic measure.
\end{abstract}

\maketitle

\section{Introduction}
The isoperimetric problem consists in finding sets of given volume and minimal boundary measure.
It has been intensively studied in Euclidean and Riemannian geometry and similar questions have been also investigated for probability measures, in relation with deviation and concentration inequalities. 
In particular there have been many interactions between the geometric and probabilistic viewpoints, see e.g. \cite{Ba2,BH,L1,M,R}.

The isoperimetric problem may be formulated in a metric measure space $(X,d,\tau)$ considering the so called isoperimetric function. 
Given a Borel subset $A\subset X$ and $h>0$, the $h$-enlargement of $A$ is  $A_h:=\{ x\in X:\, d(x,A)\le h \}$. The  \emph{boundary measure} of $A$ can be defined
as the Minkowski content
$$
\bm{\tau}(\bd A)= \lim_{h\to 0^+}\frac{\tau(A_h\setminus A)}{h}.
$$
We call {\em\isopf} of $\tau$ the function
$$
\I{\tau}(y) = \inf \{ \bm{\tau}(\bd A)\ |\ \tau(A)=y \},
$$
defined on $[0,\tau(X)]$.
We say that  $A\subset  X$  is an \emph{isoperimetric set}, or solves the isoperimetric problem for $\tau$,  if $\I{\tau}\big(\tau(A)\big)=\bm{\tau}(\bd A)$, meaning that $A$ minimizes the boundary measure among all the subsets with the same measure.

Determining isoperimetric sets is a beautiful problem, but often too difficult.
A natural approach is to look first for sets which look like minimizers of the boundary measure,
for infinitesimal volume preserving deformations.
More precisely, one can consider the so-called stationarity and stability properties of an open set, which correspond to the fact that, under the action of measure preserving  perturbations, the first variation of the boundary measure vanishes and the second variation is non-negative, respectively (see \cite{Ba, G}).
In \cite{RCBM}, it is proved that these conditions have rather simple explicit analytic characterization, which will be described in details in the next sections.

\medskip

In this note we consider product probability measures on Euclidean spaces (which amount to independent random variables). 
To be more precise, we consider a probability measure $\tau$ on $\R^d$ with a density with respect to the $d$-dimensional Lebesgue measure: $d\tau(x)=f(x)\, dx$. 
Through most of paper we will consider only positive and $C^2$ density functions $f$ (and often we write them as $\ee^{-V}$  or $\ee^{\psi}$ with $V, \psi\in C^2$). For $N\in\N$ we denote by $\tau^N$ the \emph{$N$-fold product measure} of $\tau$ on $\R^{dN}$: $d\tau^N(x_1,...,x_N)=d\tau(x_1)\cdot...\cdot d\tau(x_N)$, with
 $x_1,...,x_N\in\R^d$.

Using the characterization proved in \cite{RCBM}, we investigate stationarity and stability of half-spaces for $\mu^{N+1}$. 
We first deal with stationarity, on which the main result is Corollary \ref{stazcoroll}, that provides a characterization of stationary half-spaces for $\tau^N$.
In particular this result shows that, coordinate half-spaces apart and with the exception of the Gaussian measure, there are very few possible stationary half-spaces. 
Subsequently, in Theorem \ref{stabteocoordinate} and Theorem \ref{stabteononcoordinate}, we establish some conditions allowing to select stable half-spaces among those which are stationary.

More precisely Theorem \ref{stabteocoordinate} concerns coordinate half-spaces (and the simple case of Gauss measure); in this case the stability condition involves the so-called spectral gap of the measure $\mu$ (i.e. the best constant in the Poincar\'e inequality) and no special assumptions are requested on the measure. 
On the contrary Theorem \ref{stabteononcoordinate} is about non-coordinate (stationary) half-spaces for log-concave measure. 
Roughly speaking we prove that in the non-coordinate case, stability in any dimension is equivalent to stability (of projections) in dimension three, and stability in dimension three implies stability (of projections) in dimension two. 
This last implication can not be reversed in general, as it is pointed out in the study of the logistic measure, made in Section \ref{sec:ex}. 

The notion of spectral gap  plays a crucial role for the stability issues. 
In particular Section \ref{poincaresec} is devoted to prove a tensorization result for weighted Poincar\'e inequalities, which represents a crucial step in the proof of Theorem \ref{stabteononcoordinate}.

\medskip

The purpose of the second part of this paper, i.e. Section \ref{sec:ex}, is to illustrate by some examples
the variety of situations that may occur, according to the choice of the measure $\mu$, concerning stable half-spaces. In particular, we construct measures for which the only stable half-spaces are coordinate, and we show that for suitable perturbations of the Gaussian measure stable non-coordinate hyperplanes exist in any dimensions.

A special place of this part is taken up by the study of the logistic measure $\lo$ on $\R$, whose density is 
$$
f(x)=\frac{\ee^x}{(1+\ee^x)^2}, \quad x\in \R.
$$
Our first result is to find the exact value of the spectral gap of $\lo$ (see Proposition \ref{calcolola}):
$$
\lambda_\lo=\frac 14,
$$
which allows us to estimate the \isopf of the product measure in terms of the \isopf of the generating one-dimensional measure. 
This problem is worth to be studied for a general probability measure $\mu$. 
Indeed one can easily prove that for every $N\ge 1$ and for every $0\le t\le 1$,
\begin{equation}\label{ison}
\I{\mu^{N+1}}(t) \le \I{\mu^N}(t)\le \dots \le \I{\mu}(t).      
\end{equation}
A less trivial task is to find a corresponding lower bound, i.e. a constant $\C{\mu}$ such that for every $N\ge 1$ and for every $0\le t \le 1$:
\begin{equation}\label{isoGeneric}
\I{\mu^N}(t) \ge \C{\mu} \ \I{\mu}(t).
\end{equation}
Roughly speaking, this inequality means that if a set $A\subseteq\R^{d}$ minimizes the boundary measure between all the subsets of $\R^{d}$ with fixed measure $\mu^d(A)$, then the $(d+m)$-dimensional cylinder associated to $A$ solves again the isoperimetric problem for $\I{\mu^{d+m}}$, up to a factor $\C{\mu}$. 
More generally such a dimension-free isoperimetric inequality leads to an estimate of the so called infinite dimensional isoperimetric function $\I{\mu^{\infty}}$ defined as
$$
\I{\mu^\infty}(t)=\inf_{N\in\N}\I{\mu^N}(t).
$$
A refinement of this estimate can be done comparing $\I{\mu^{\infty}}$ with the \isopf of the Gaussian measure (see Proposition \ref{muinfinity}).

Sufficient conditions on the measure $\mu$ which guarantee the validity of (\ref{isoGeneric}) are known (see \cite{BCR1},\cite{BCR2}, \cite{eM}).
The most famous example is given by the Gaussian measure
$$
d\gamma(x)=\frac 1{\sqrt{2\pi\sigma^2}}\ee^{-\frac{|x|^2}{2\sigma}},
$$
which satisfies inequality (\ref{isoGeneric}) with $\C{\gamma}=1$, that is: $\I{\gamma^N}(t)=\I{\gamma}(t)$ for every $0\le t\le 1$ (\cite{Bo}). In this case half-spaces are isoperimetric sets. Another important example is due to Bobkov and Houdr\'e, who proved in \cite{BH}, that the exponential measure
$$
d\nu(x)=\frac 12 \ee^{-|x|},
$$
satisfies (\ref{isoGeneric}) with $\C{\nu}=\frac 1{2\sqrt{6}}$. In \cite{BCR2} other examples are given; in particular it is proved that if $V$ is a symmetric non-negative convex function with $\sqrt{V}$ concave in the large, then such an inequality (\ref{isoGeneric}) holds. 

In Theorem \ref{teoisoplogi} we establish the validity of (\ref{isoGeneric}) for the logistic measure, with an explicit value of the constant $\C{\lo}$. 
Finally, applying the results of the first part of the paper, we give a detailed description of stationary and stable half-spaces for the logistic measure.

\section{Spectral gap and Poincar\'e type inequalities}\label{poincaresec}
Let $\tau$ be a probability measure $\tau$ $\R^d$. For $u\in\text{L}^{2}_{\tau}(\R^d)$ we denote by $\var{\tau}(u)$ its variance with respect to $\tau$. 
The \emph{spectral gap} $\lambda_\tau$ of $\tau$, also called its \emph{Poincar\'e constant}, is by definition the largest  constant $\la$ such that for every $u\in\text{W}^{1,2}_{\tau}(\R^d)$
\begin{equation}\label{poinc}
\la\ \var{\tau}(u)\le \int_{\R^d}|Du|^2\,d\tau(x).
\end{equation}
Clearly $\la_\tau\ge0$.

\subsection{Log-concave probability measures}
It was proved in \cite{B2} that log-concave probability measures $\tau$ satisfy a non-trivial Poincar\'e inequality, that is $\lambda_\tau>0$. The precise estimation of this positive value is still a topic of investigation. In this 
paper these quantitative bounds will not be needed.

However, we will often use another Poincar\'e type inequality for log-concave probability measures, which 
  was proved by Brascamp and Lieb in \cite{BL}. Its main feature is the weight in the energy term.
It is valid for functions on $\R^n$ but we will apply it only for $n=1$ where the statement is as follows.

Let $\tau$ be a probability measure on $\R$, with density $\ee^{-g}$ such that $g\in C^2(\R)$ and 
$g''>0$ in $\R$. Then for every $C^1(\R)$ function $u$ with zero mean and finite variance with respect to $\tau$, it holds
\begin{equation}\label{BrLieb}
\int_{\R}u^2 \ee^{-g}\,dx \le \int_{\R} u'^2 \frac1{g''}\ee^{-g}\, dx.  
\end{equation}

\subsection{Tensorizing weighted Poincar\'e inequalities}
A classical feature of Poincar\'e inequalities is the so-called tensorization property: for any probability measures $\tau_1$  and 
$\tau_2$ (on $\R^{d_1}$ and $\R^{d_1}$), 
it holds $\lambda_{\tau_1\times\tau_2}=\min(\lambda_{\tau_1},\lambda_{\tau_2})$ where $\tau_1\times\tau_2$ is the product measure.
The study of the stability conditions will naturally lead to weighted Poincar\'e inequalities, for which tensorization issues are
more complicated, as the next statement shows.  

\begin{theorem}\label{teoPoincpeso}
Let $\tau,\nu$ be two smooth probability measures on $\R$ with density: $d\tau(y)=\ee^{\psi(y)}dy$, $d\nu(y)=\ee^{\fii(y)} dy$ and let $\Theta:\R\to (0,+\infty)$.
Then for every $u\in C^{\infty}_0(\R^{M+1})$,
\begin{align}\label{ptheta}\tag{$P_\Theta$}
 \int_{\R^{M+1}}u(x,y) d\tau^M(x)d\nu(y)=0 \qquad\text{ implies }& \\
 \int_{\R^{M+1}}u^2(x,y) \Theta(y)d\tau^M(x)d\nu(y) \le \int_{\R^{M+1}}|Du(x,y)|^2d\tau^M(x) d\nu(y)&, \nonumber
\end{align}
if and only if the following conditions hold:
\begin{enumerate}
\item\label{p1} for every $w\in C^{\infty}_0(\R)$ such that $\int_R w(y)d\nu(y)=0$ we have
$$
\int_{\R}w^2(y)\Theta(y)d\nu(y)\le \int_{\R}w'^2(y)d\nu(y);\\
$$
\item\label{p2} every function $w\in C^{\infty}_0(\R)$ satisfies
$$
\int_{\R}w^2(y)\Theta(y)d\nu(y) \le \la_{\tau} \int_{\R} w^2(y)d\nu(y)+\int_{\R}w'^2(y) d\nu(y).
$$
\end{enumerate}
\end{theorem}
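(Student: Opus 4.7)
The plan is to prove both implications by exploiting the tensorization identity $\lambda_{\tau^M}=\lambda_\tau$ together with the variance-style decomposition $u=\bar u+\tilde u$ along the last coordinate, where $\bar u(y):=\int_{\R^M}u(x,y)\,d\tau^M(x)$ and $\tilde u:=u-\bar u$.

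For the \emph{necessity} of (\ref{p1}) and (\ref{p2}), I would test $(P_\Theta)$ against product functions. Taking $u(x,y)=w(y)$ with $\int w\,d\nu=0$ kills the $x$-gradient and reproduces (\ref{p1}) directly. To extract (\ref{p2}), I would plug in $u(x,y)=w(y)v(x)$ for $w\in C_0^\infty(\R)$ arbitrary and a normalized $v\in C_0^\infty(\R^M)$ satisfying $\int v\,d\tau^M=0$ and $\int v^2\,d\tau^M=1$; Fubini yields the zero-mean hypothesis, the weighted $L^2$ norm reduces to $\int w^2\Theta\,d\nu$, and the Dirichlet energy becomes $\int(w')^2 d\nu+\bigl(\int|Dv|^2 d\tau^M\bigr)\int w^2 d\nu$. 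Letting $v$ run along a sequence whose Rayleigh quotient tends to $\lambda_\tau$ (possible by $\lambda_{\tau^M}=\lambda_\tau$) and passing to the limit delivers (\ref{p2}).

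For \emph{sufficiency}, assume (\ref{p1}) and (\ref{p2}) and fix $u\in C_0^\infty(\R^{M+1})$ with zero mean. Since $\int \tilde u(\cdot,y)\,d\tau^M=0$ for every $y$, the cross term vanishes and
\[ \int u^2\Theta\,d\tau^M d\nu=\int \bar u^2\Theta\,d\nu+\int \tilde u^2\Theta\,d\tau^M d\nu. \]
Because $\int \bar u\,d\nu=\int u\,d\tau^M d\nu=0$, condition (\ref{p1}) applied to $\bar u$ gives $\int \bar u^2\Theta\,d\nu\le \int (\bar u')^2\,d\nu$ at once. For the remaining piece I would set $w(y):=\bigl(\int \tilde u^2(x,y)\,d\tau^M(x)\bigr)^{1/2}$ and apply (\ref{p2}) to $w$. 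The term $\lambda_\tau\int w^2\,d\nu$ is controlled, for each fixed $y$, by the Poincar\'e inequality for $\tau^M$ applied to the mean-zero function $\tilde u(\cdot,y)$, yielding $\lambda_\tau\int w^2\,d\nu\le \int |D_x u|^2\,d\tau^M d\nu$ after integrating in $y$ (using $D_x\tilde u=D_x u$). For the energy term, differentiating $w^2$ under the integral sign produces $w\,w'=\int \tilde u\,\partial_y\tilde u\,d\tau^M$, and Cauchy--Schwarz then yields the pointwise bound $(w')^2(y)\le \int (\partial_y\tilde u)^2\,d\tau^M$ wherever $w>0$.

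These estimates combine to $\int \tilde u^2\Theta\,d\tau^M d\nu\le \int |D_x u|^2\,d\tau^M d\nu+\int(\partial_y\tilde u)^2\,d\tau^M d\nu$. A short computation exploiting $\int \partial_y u\,d\tau^M=\bar u'$ then shows $\int(\partial_y\tilde u)^2\,d\tau^M d\nu=\int(\partial_y u)^2\,d\tau^M d\nu-\int(\bar u')^2\,d\nu$, and this negative residue cancels precisely the $\int(\bar u')^2 d\nu$ coming from (\ref{p1}); adding the two bounds reassembles $\int|Du|^2\,d\tau^M d\nu$ on the right and establishes $(P_\Theta)$. The main technical obstacle is that $w$ is only smooth where it is positive, so (\ref{p2}) cannot be applied to $w$ directly; I would bypass this by regularizing to $w_\epsilon:=\sqrt{w^2+\epsilon\chi^2}$ for a cutoff $\chi\in C_0^\infty(\R)$ enclosing $\mathrm{supp}\,u$, applying (\ref{p2}) to $w_\epsilon$, and letting $\epsilon\to 0^+$. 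The clean cancellation of $\int(\bar u')^2 d\nu$ is the arithmetic heart of the argument.
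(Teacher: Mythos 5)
Your proposal is correct. The necessity direction is essentially the paper's own argument (testing $(P_\Theta)$ on product functions $u(x,y)=s(x)w(y)$ and optimizing over $s$), so there is nothing to add there.

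The sufficiency direction, however, is a genuinely different and arguably cleaner route. The paper restricts to cubes $[-k,k]^{M+1}$, invokes compactness to obtain a minimizer of the Rayleigh quotient, writes down its Euler--Lagrange equation, and then splits into two cases according to whether the partial average $g(y)=\int u(\cdot,y)\,d\tau^M$ vanishes identically or not. Your argument avoids all of this by working directly with the orthogonal decomposition $u=\bar u+\tilde u$: condition~(\ref{p1}) is applied to $\bar u$, condition~(\ref{p2}) is applied to the $y$-section norm $w(y)=\|\tilde u(\cdot,y)\|_{L^2(\tau^M)}$, the fibre Poincar\'e inequality for $\tau^M$ (with $\lambda_{\tau^M}=\lambda_\tau$) controls the $\lambda_\tau\int w^2 d\nu$ term by $\int |D_xu|^2$, Cauchy--Schwarz gives $(w')^2\le\int(\partial_y\tilde u)^2 d\tau^M$, and the Pythagoras identity $\int(\partial_y\tilde u)^2 d\tau^M=\int(\partial_y u)^2 d\tau^M-(\bar u')^2$ supplies exactly the cancellation needed to reassemble $\int|Du|^2$. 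I have checked the arithmetic and it closes. This proof is more elementary and self-contained: it needs no minimizer existence, no Euler--Lagrange equation, and no approximation by cubes.

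One small technical caveat, which you flag but whose proposed fix does not quite work as stated: $w$ is Lipschitz but not generally $C^\infty$, and the regularization $w_\ep=\sqrt{w^2+\ep\chi^2}$ is still not $C^\infty_0$, because $\sqrt{\cdot}$ is not smooth at $0$ and $w^2+\ep\chi^2$ does vanish outside $\mathrm{supp}\,\chi$. The cleanest repair is to observe that (\ref{p2}), stated for $w\in C^\infty_0(\R)$, extends by mollification to all compactly supported Lipschitz functions (all integrals involved are over a fixed compact set, $\Theta$ is locally bounded, and the gradients converge in $L^2(\nu)$), and then apply it to $w$ directly; alternatively compose $w^2$ with a smooth increasing function that vanishes identically near $0$ and approximates $\sqrt{\cdot}$ from below. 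Either way the issue is routine and does not affect the validity of the argument.
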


\begin{proof}
We are going to prove the two implications separately. Let us start by the easier one, which actually works
even if $\Theta$ changes signs.

[(\ref{ptheta})$\Longrightarrow$ (\ref{p1}) and (\ref{p2})]
Consider a function $u$ of the form: $u(x,y)=s(x)w(y)$ for $x\in\R^M, y\in\R$, with $s\in C^{\infty}_0(\R^M)$, $w\in C^{\infty}_0(\R)$.
Hence $u$ has zero mean with respect to the measure $d\mu^Md\nu$ if and only if either $\int_{\R^M}s(x)d\tau^M(x)=0$, or $\int_\R w(y)d\nu(y)=0$.
Moreover, as $|Du|^2=w^2|Ds|^2+s^2 w'^2$, and $s\not\equiv 0$, condition (\ref{ptheta}) reads as
\begin{eqnarray}\label{poincpeso1}
\text{ either }\int_{\R^M}s(x)d\tau^M(x)=0, \text{ or } \int_\R w(y)d\nu(y)=0 \text{ implies }\nonumber\\
 \int_{\R}w^2(y)\Theta(y)d\nu(y) \le \dfrac{\int_{\R^M}|Ds(x)|^2d\tau^M(x)}{\int_{\R^M}s(x)^2d\tau^M(x)} \int_\R w^2(y)d\nu(y) +\int_\R w'^2(y)d\nu(y).
\end{eqnarray}
Assume $\int_\R w(y)d\nu(y)=0$ and in (\ref{poincpeso1}) pass to the infimum over all $s(x)\in C^{\infty}_0(\R^M)$.
As
$$
\inf_{u\in C^{\infty}_0(\R^M)}\dfrac{\int_{\R^M}|Ds(x)|^2d\tau^M(x)}{\int_{\R^M}s(x)^2d\tau^M(x)},
$$
can be easily proved to be zero, we get condition (\ref{p1}).
Assume now $\int_{\R^M}s(x)d\tau^M(x)=0$.
Consider (\ref{poincpeso1}) and pass to infimum over $s(x)$ with compact support and zero mean with respect to $d\tau^M$.
By approximation we have
\begin{eqnarray*}
&\inf\left\{ \dfrac{\int_{\R^M}|Ds|^2d\tau^M}{\int_{\R^M} s^2d\tau^M}\ :\ s\in C^{\infty}_0(\R^M),\ \int_{\R^M} s(x)d\tau^M(x)=0 \right\} \\
&=\inf\left\{ \dfrac{\int_{\R^M}|Ds|^2d\tau^M}{\int_{\R^M} s^2d\tau^M}\ :\ \int_{\R^M} s(x)d\tau^M(x)=0 \right\}= \la_{\tau^M}=\la_{\tau},
\end{eqnarray*}
which entails condition (\ref{p2}).
\medskip

[(\ref{p1}) and (\ref{p2})$\Longrightarrow$ (\ref{ptheta})]
Let $\qn{M+1}{k}$ be the $(M+1)$-dimensional cube $[-k,k]^{M+1}$.
We are going to prove the statement in the cubes $\qn{M+1}{k}$, the conclusion in the general case
follows by a standard approximation argument. The advantage of the case of cubes is that,
by the positivity of $\Theta$, we are allowed to use standard compactness results (notice that the same argument would need substantial changes in the case the considered densities vanish at some point). Define
\begin{equation}\label{poincpeso4}
\la_k=\min_{\substack{  v\in C^{\infty}_0(\qn{M+1}{k}) \\ \int v d\mu^M\nu=0  }}\frac{\int_{\qn{M+1}{k}}|Dv|^2 d\tau^Md\nu}{\int_{\qn{M+1}{k}}v^2\Theta(y)d\tau^Md\nu};
\end{equation}
our aim is then to prove $\la_k\ge 1$.
Assume the minimum to be attained by a function $u(x,y)\in C^{\infty}_0(\qn{M+1}{k})$, that is $\int_{\qn{M+1}{k}}ud\mu^Md\nu=0$ and
$$
\frac{\int_{\qn{M+1}{k}}|Du|^2 d\tau^Md\nu}{\int_{\qn{M+1}{k}}u^2\Theta(y)d\tau^Md\nu}=\la_k\,.
$$
Hence $u$ solves the corresponding Euler-Lagrange equation and there exists $\sigma\in\R$ such that for every $x\in \text{int}\qn{M+1}{k}$
\begin{eqnarray*}
\begin{cases}
\Delta_xu+ \langle D\psi;D_xu\rangle + u_{yy}+ \fii'u_y +\la_k\Theta u+\sigma=0,\\
u_{x_1}(\pm k,x_2,...,x_{M},y)\equiv ...\equiv u_{x_M}(x_1,...,x_{M-1},\pm k,y)\equiv u_y(x_1,...,x_M,\pm k)\equiv 0.
\end{cases}
\end{eqnarray*}
Integrating the previous relation on $\qn{M}{k}$ with respect to the measure $d\tau^M(x)$, it follows
$$
\int_{\qn{M}{k}}u_{yy}d\tau^M(x)+ \int_{\qn{M}{k}}u_yd\tau^M(x)\ \fii'(y)+\la_k\Theta(y) \int_{\qn{M}{k}}u\;d\tau^M(x) +\sigma=0,
$$
that is
$$
g''(y)+g'(y)\fii'(y)+\la_k\Theta(y) g(y)+\sigma=0, \qquad\text{ with }g'(\pm k)=0,
$$
where $g(y)=\int_{\qn{M}{k}}u(x,y)d\tau^M(x)$.
Let us integrate the latter equation with respect to $g(y)d\nu(y)$; recalling that $\int_{\qn{1}{k}}g(y)d\nu(y)=\int_{\qn{M+1}{k}}u(x,y)d\tau^M(x)d\nu(y)=0$, we find
\begin{equation}\label{poincpeso3}
\int_{\qn{1}{k}} g'^2d\nu(y)=\la_k \int_{\qn{1}{k}} g^2 \Theta(y)d\nu(y).
\end{equation}
Assume $g\not\equiv 0$.
Then thanks to (\ref{p1}) and (\ref{poincpeso3}), it holds
$$
\la_k=\dfrac{\int_{\qn{1}{k}} g'^2d\nu(y)}{\int_{\qn{1}{k}} g^2 \Theta(y)d\nu(y)}\ge 1,
$$
which implies, recalling (\ref{poincpeso4}), condition (\ref{ptheta}) on $\qn{M+1}{k}$.

On the other hand, let us consider the case $g\equiv 0$, that is $\int_{\qn{M}{k}}u(x,y)d\tau^M(x)=0$ for every $y\in\qn{1}{k}$.
Condition (\ref{p2}), together with an integration with respect to $d\tau^M$ give
\begin{equation}\label{poincpeso2}
\int_{\qn{M+1}{k}} u^2\Theta(y) d\tau^M(x)d\nu(y) \le \la_\tau \int_{\qn{M+1}{k}} u^2 d\tau^M(x)d\nu(y) +\int_{\qn{M+1}{k}} u_y^2 d\tau^M(x)d\nu(y).
\end{equation}
Notice that the Poincar\'e inequality for the measure $\tau^M$ entails
$$
\int_{\qn{M+1}{k}} u^2 d\tau^Md\nu \le \frac 1{\la_{\tau}} \int_{\qn{M+1}{k}}|D_xu|^2d\tau^Md\nu,
$$
and hence by (\ref{poincpeso2}) it follows
\begin{eqnarray*}
 &\int_{\qn{M+1}{k}} u^2\Theta(y) d\tau^M(x)d\nu(y) \le \int_{\qn{M+1}{k}} |D_xu|^2 d\tau^M(x)d\nu(y) +\int_{\qn{M+1}{k}} u_y^2 d\tau^M(x)d\nu(y) \\
        &=\int_{\qn{M+1}{k}} |Du|^2 d\tau^M(x)d\nu(y),
\end{eqnarray*}
that is $\la_k\ge 1$.
This leads to condition (\ref{ptheta}) in the cube $\qn{M+1}{k}$ and, as already noticed, this is enough to prove (\ref{ptheta}) in the whole $\R^{M+1}$.
\end{proof}

\section{Analysis of half-spaces}\label{secstab}
\subsection{Stationary and stable sets}

We start by recalling the characterization of stationarity and stability, with respect to the isoperimetric inequality, established in \cite{RCBM}, that will be used throughout the rest of the paper. 
In fact we will adopt such characterizations as definitions of stationary and stable sets. 

Let $d\mu(x)=f(x)\,dx=\ee^{\psi}\,dx$ be a smooth probability measure in $\R^{N+1}$ and consider $A\subseteq\R^{N+1}$ an open set with $C^2$ boundary. $A$ is {\em stationary} for the measure $\mu$ if it has constant generalized mean curvature, that is
\begin{equation}\label{stationarity}
\hh_{\psi}(\bd A)= N\hh(x) -\langle D\psi(x),\nu(x) \rangle \Big|_{\bd A}= \text{constant},
\end{equation}
where $\hh$ is the standard mean curvature of $\bd A$ and $\nu(x)$ is its outer unit normal vector at $x$.
Moreover, if for every function $u\in C^{\infty}_0(\bd A)$ such that $\int_{\bd A} u(x)f(x)\,da=0$, it holds
\begin{equation}\label{stability}
\qq{\psi}(u)=\int_{\bd A} f(x)\Big( |D_{\bd A}u(x)|^2-K^2 u^2(x) \Big)\, da(x)+\int_{\bd A} f(x) u^2(x)\left\langle D^2\psi(x)\nu(x);\nu(x)\right\rangle\,da(x)  \ge 0,
\end{equation}
where $K^2$ is the sum of the squared principal curvatures of $\bd A$ and $da(\cdot)$ denotes the element of area, then $A$ is {\em stable}.

\subsection{Stationarity of half-spaces}

Let us first introduce the following class of unit vectors
$$
\vn{M}=\Big\{ v\in\sn{M}\ :\  \exists i\neq j\in\{1,...,M\}\text{ s.t. }\vv_k=0\ \text{for }k\not\in\{ i,j\},\ |\vv_i|=|\vv_j|\neq 0\Big\}.
$$
Hence each element of $\vn{M}$ has exactly two non-null coordinates which are in absolute value equal to $\frac 1{\sqrt{2}}$.
Let us indicate by $\vn{M}^+$ the subclass of $\vn{M}$ such that the non-null components have the same sign (end hence $\vv_i=\vv_j$), and by $\vn{M}^-$ the subclass with components of different signs (that is $\vv_i=-\vv_j$).
In particular we define $\tilde{\vv}\in\vn{N}$ such that $\tilde{\vv}_{N+1},\tilde{\vv}_{N}\neq 0$, hence $\tilde{\vv}\in\vn{N}^+$ if $\tilde{\vv}=(0,...,0,1/{\sqrt{2}},1/{\sqrt{2}})$ or $\tilde{\vv}=(0,...,0,-1/{\sqrt{2}},-1/{\sqrt{2}})$; while $\tilde{\vv}\in\vn{N}^-$ if $\tilde{\vv}= (0,...,0,-1/{\sqrt{2}},1/{\sqrt{2}})$ or $\tilde{\vv}=(0,...,0,1/{\sqrt{2}},-1/{\sqrt{2}})$.

A characterization of the stationarity of half-spaces for general product measures follows.
\begin{theorem}\label{stazteo}
Let $\mu_i$ be probability measures on $\R$: $d\mu_i(t)=\ee^{\psi_i(t)}dt$, $t\in\R$, with $\psi_i\in C^2(\R)$, for $i=1,...,N+1$.
Consider their product measure $d\mu(x)=d\mu_1(x_1)\cdot ... \cdot d\mu_{N+1}(x_{N+1})$, with  $x=(x_1,...,x_{N+1})\in\R^{N+1}$.
Let $H^{N+1}_{\vv,t}$ be the half-space of $\R^{N+1}$:
$$
H^{N+1}_{\vv,t}=\left\{\ x\in\R^{N+1}\ :\ \langle x,\vv \rangle < t\right\}=\left\{x\in\R^{N+1}\ :\ \sum_{i=1}^{N+1} x_i\vv_i< t\right\},
$$
with $\vv\in \sn{N}$.
$H^{N+1}_{\vv,t}$ is stationary if and only if at least one of the following holds:
\begin{enumerate}[(i)]
\item \label{1} $H^{N+1}_{\vv,t}$ is a coordinate half-space;
\item \label{2} $\vv$ has (exactly) two non-null components $\vv_i,\vv_j$ and $\psi_i''(x)=\psi''_j(\tau-\alpha x)$, where $\tau= \frac{t}{\vv_j}$, and $\alpha=\frac{\vv_i}{\vv_j}$;
\item \label{3} $\vv$ has at least three non-null components $\vv_i,\vv_j,\vv_k$ and the corresponding measures $\mu_i,\mu_j,\mu_k$ are Gaussian with the same variance.
\end{enumerate}
\end{theorem}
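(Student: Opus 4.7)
The plan is to reduce the stationarity condition to an explicit equation on the densities and then perform a case analysis according to the size of the support of $\vv$. Since $\bd H^{N+1}_{\vv,t}$ is a hyperplane, its standard mean curvature $\hh$ vanishes identically, and its outer unit normal is $\nu \equiv \vv$. Writing $\psi(x) = \sum_{i=1}^{N+1}\psi_i(x_i)$, condition (\ref{stationarity}) becomes
$$
\Psi(x) := \sum_{i=1}^{N+1} \vv_i\, \psi_i'(x_i) \equiv \text{constant} \quad \text{on} \quad \big\{x\in\R^{N+1} : \langle x,\vv\rangle = t\big\}.
$$
Let $I := \{i : \vv_i \neq 0\}$. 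Since neither $\Psi$ nor the constraint depends on the variables $x_\ell$ for $\ell \notin I$, the problem reduces to requiring that $\Psi$ be constant on the affine hyperplane $\{(x_i)_{i\in I} : \sum_{i\in I} \vv_i x_i = t\}$ inside $\R^{|I|}$.

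Next I split according to $|I|$. If $|I|=1$, $H^{N+1}_{\vv,t}$ is a coordinate half-space and (\ref{1}) holds; nothing needs to be checked. If $|I|=2$, write $I=\{i,j\}$ and parametrize the line by $x_j = \tau - \alpha x_i$ with $\tau = t/\vv_j$ and $\alpha = \vv_i/\vv_j$. Differentiating the identity $\vv_i\psi_i'(x_i) + \vv_j\psi_j'(\tau - \alpha x_i) \equiv c$ in $x_i$ and using $\vv_i = \alpha \vv_j$ yields exactly $\psi_i''(x_i) = \psi_j''(\tau - \alpha x_i)$, which is (\ref{2}); the converse follows by integrating this ODE and recombining. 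If $|I|\ge 3$, the affine hyperplane under consideration has dimension $\ge 2$, and ``$\Psi$ constant'' on it is equivalent to $\nabla\Psi \parallel (\vv_i)_{i\in I}$ at every point of it, i.e.\ the existence of a function $\lambda$ such that $\psi_i''(x_i) = \lambda$ for every $i\in I$ on the hyperplane.

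To conclude the $|I|\ge 3$ case, pick any two distinct indices $i,j\in I$ and use a third index $k\in I\setminus\{i,j\}$ (setting the remaining active coordinates to zero) to absorb arbitrary variations: given any $(x_i^0,x_j^0)\in\R^2$, the point can be extended to one on the hyperplane through the choice of $x_k$. Consequently $\psi_i''(x_i^0) = \psi_j''(x_j^0)$ for all $x_i^0,x_j^0\in\R$, so each $\psi_i''$ ($i\in I$) is a single common constant $a$. Integrability of $\ee^{\psi_i}$ on $\R$ forces $a<0$, so every $\mu_i$ with $i\in I$ is Gaussian with common variance $-1/a$, which is (\ref{3}); the converse is immediate since for such Gaussians $\psi_i'(x_i) = -x_i/\sigma^2 + b_i$ gives $\Psi(x) = -\langle x,\vv\rangle/\sigma^2 + \sum_{i\in I}\vv_i b_i$, constant on $\{\langle x,\vv\rangle = t\}$. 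The main obstacle is precisely this last step: one must leverage the freedom provided by a third active coordinate to upgrade the pointwise identity of the $\psi_i''$ on a codimension-one subset to a genuine global constancy, which then pins down the Gaussian form.
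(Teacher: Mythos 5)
Your proposal is correct and takes essentially the same route as the paper: both reduce stationarity to the condition $\sum_i \vv_i\psi_i'(x_i)\equiv \text{const}$ on the bounding hyperplane, differentiate, and split into cases according to how many components of $\vv$ are nonzero, with the multi-variable case forcing the second derivatives $\psi_i''$ to be a common (negative) constant. The only minor organizational difference is that the paper eliminates one coordinate explicitly via $x_{N+1}=\tau-\sum\alpha_i x_i$ before differentiating, while you keep things intrinsic (gradient parallel to $\vv$) and then use a third active coordinate to decouple two others; both amount to the same calculation.
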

\begin{proof}
Up to a rearrangement of variables, we may assume $\vv_{N+1}\neq 0$.
Condition (\ref{stationarity}) for the half-space $H^{N+1}_{\vv,t}$  and the measure $\mu^{N+1}$, reads as
\begin{equation}\label{hcostante}
\sum_{i=1}^{N}\psi_i'(x_i)\; \alpha_i + \psi_{N+1}'\Big(\tau-\sum_{i=1}^{N}\alpha_i\; x_i\Big) = \text{constant},       
\end{equation}
for every $x_i\in\R$, where $\tau=\frac {t}{\vv_{N+1}}$ and $\alpha_i=\frac{\vv_i}{\vv_{N+1}}$.

[$\Longrightarrow$]

Notice that the left hand side in condition (\ref{hcostante}) can be seen as a function of $N$-variables $x_1,...,x_{N}$, and hence we can differentiate it with respect to each variable $x_i$ getting
\begin{equation}\label{psi2}
\alpha_k\; \psi_k''(x_k)-\alpha_k\; \psi_{N+1}''\Big(\tau-\sum_{i=1}^{N}\alpha_ix_i\Big) = 0, \qquad k=1,...,N.
\end{equation}
One of the three following situations happens:
\begin{enumerate}[(a)]
\item \label{a} $\alpha_k=0$ for $k=1,...,N$;
\item \label{b} there exists only one $k\in\{1,...,N\}$ such that $\alpha_k\neq 0$;     
\item \label{c} there exist at least two non-null $\alpha_k,\alpha_j$.
\end{enumerate}

Case (\ref{a}) corresponds to the case $H^{N+1}_{\vv,t}$ coordinate, which is condition (\ref{1}).
In case (\ref{b}) conditions (\ref{psi2}) becomes $\psi_k''(x)=\psi_{N+1}''(\tau-\alpha_k x)$ for every $x\in\R$, that is, condition (\ref{2}). In case (\ref{c}) equation (\ref{psi2}) is
$$
\psi_k''(x)= \psi_{N+1}''\Big(\tau-\sum_{i=1}^{N}\alpha_ix_i\Big),
$$
where on the left hand side there is a one-variable function, while the right hand side depends on at least two variables.
This entails that $\psi''_k$ has to be constant, for every $k$ such that $\alpha_k\neq 0$, that is $\psi''_k\equiv \psi''_{N+1}\equiv \text{ constant }$, which is condition (\ref{3}).

[$\Longleftarrow$]

Case (\ref{1}) can be proved by trivial calculation.
Let us consider case (\ref{2}).
As $\psi_i''(x)=\psi''_{N+1}(\tau-\alpha x)$, integrating with respect to $x\in\R$ we get $\alpha\; \psi_i'(x)+\psi_{N+1}'(\tau-\alpha x)=\psi_{N+1}'(\tau)+\alpha\;\psi_i'(0)$, which is equivalent to
$$
\sum_{i=1}^{N}\psi_i'(x_i)\;\alpha_i + \psi_{N+1}'\Big(\tau-\sum_{i=1}^{N}\alpha_i\;x_i\Big) = \psi_{N+1}'(\tau)+\alpha\; \psi_i'(0),
$$
and hence (\ref{hcostante}) holds.

Consider case (iii).
For each non-null $\alpha_i$ we have
$$
\psi'_i(x_i)=-\frac{(x_i-m_i)}{\sigma^2};
$$
hence the left hand side of condition (\ref{hcostante}) becomes
$$
-\frac 1{\sigma^2}\sum_{i=1}^{N+1}x_i\alpha_i +\frac 1{\sigma^2}\sum_{i=1}^{N+1}m_i\alpha_i -\frac 1{\sigma^2}\Big( \tau-\sum_{i=1}^{N+1}\alpha_ix_i-m_{N+1} \Big),
$$
which is constant.
\end{proof}

Particularly relevant is the case of $N$-fold product measures where directions of possible stationary half-spaces are of only three types.
\begin{corollary}\label{stazcoroll}
Let $\mu$ be a measure on $\R$: $d\mu(t)=\ee^{\psi(t)}dt$, $t\in\R$, with $\psi\in C^2(\R)$.
Consider its product measure $d\mu^{N+1}(x)$ in $\R^{N+1}$.
$H^{N+1}_{\vv,t}$ is stationary if and only if at least one of the following holds:
\begin{enumerate}[(i)]
\item\label{31} $H^{N+1}_{\vv,t}$ is a coordinate half-space;
\item\label{32} $\vv\in\vn{N}^-$ and $\psi''$ is $\tau=\sqrt{2}t$-periodic;
\item\label{33} $\vv\in\vn{N}^+$ and $\psi''$ is symmetric with respect to $\frac{\tau}2=\pm\frac{\sqrt{2}t}2$;
\item\label{34} $\mu$ is Gaussian: $d\mu(x)=\frac 1{\sqrt{2\pi\sigma^2}}\ee^{-\frac{x^2}{2\sigma^2}}$.
\end{enumerate}
\end{corollary}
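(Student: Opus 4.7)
The plan is to specialize Theorem~\ref{stazteo} to the identical-marginals case $\psi_i\equiv\psi$. Cases (1) and (3) of the theorem translate essentially for free: case (1) is just case (i) here, and case (3)---requiring three marginals to be Gaussians with the same variance---forces $\mu$ itself to be Gaussian, which is case (iv). The substantive work is in case (2): $\vv$ has exactly two non-null components $\vv_i=a$, $\vv_j=b$ (with $a^2+b^2=1$), and the condition reads
\[
\psi''(x)=\psi''(\tau-\alpha x) \qquad \forall\,x\in\R,
\]
where $\alpha=a/b$ and $\tau=t/b$.

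I would split this case on the value of $|\alpha|$. If $|\alpha|=1$, then $|a|=|b|=1/\sqrt{2}$, so $\vv\in\vn{N}$. For $\alpha=1$ (i.e.\ $\vv\in\vn{N}^+$) the identity becomes $\psi''(x)=\psi''(\tau-x)$, so $\psi''$ is symmetric about $\tau/2=\pm\sqrt{2}\,t/2$; this is case (iii). For $\alpha=-1$ (i.e.\ $\vv\in\vn{N}^-$) it becomes $\psi''(x)=\psi''(x+\tau)$, so $\psi''$ is $\sqrt{2}\,t$-periodic; this is case (ii). If instead $|\alpha|\neq 1$, consider the affine map $T(x)=\tau-\alpha x$, which has a unique fixed point $x^{*}=\tau/(1+\alpha)$. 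Either $T$ itself (when $|\alpha|<1$) or its inverse $T^{-1}(y)=(\tau-y)/\alpha$ (when $|\alpha|>1$) is a contraction, so every orbit of $T$ converges to $x^{*}$. Since $\psi''\circ T=\psi''$ and $\psi''$ is continuous, this forces $\psi''\equiv\psi''(x^{*})$; hence $\psi$ is quadratic and $\mu$ is Gaussian, placing us in case (iv).

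The converse direction follows from the "if" part of Theorem~\ref{stazteo}: each of (i), (ii), (iii), (iv) is a specialization of its cases (1), (2) with $\alpha=-1$, (2) with $\alpha=1$, and (3), respectively. The only mildly delicate point I anticipate is the contraction argument when $|\alpha|>1$, which must be carried out via $T^{-1}$ rather than by iterating $T$; apart from this, the corollary is a direct translation of the theorem under the assumption $\psi_i\equiv\psi$.
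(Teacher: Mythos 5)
Your proposal is correct and follows essentially the same route as the paper: reduce to case (2) of Theorem~\ref{stazteo}, split on $|\alpha|=1$ versus $|\alpha|\neq 1$, and in the latter case iterate the functional identity $\psi''(x)=\psi''(\tau-\alpha x)$ to conclude $\psi''$ is constant. The paper writes out the $n$-th iterate $\psi''(x)=\psi''\bigl(\tau\sum_{i=0}^n(-1)^i\alpha^i-\alpha^{n+1}x\bigr)$ and lets $n\to\infty$ after reducing (by a change of variables) to $|\alpha|<1$, which is exactly your contraction/fixed-point argument phrased explicitly; the two are the same.
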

\begin{remark}
Notice that the half-space $H^{N+1}_{{\vv},0}$, with $\vv\in\vn{N+1}^-$, is always stationary for $\mu^{N+1}$.
Furthermore, if the measure $\mu$ is symmetric, then the half-space $H^{N+1}_{{\vv},0}$ is  stationary, for every $\vv\in\vn{N+1}$.
\end{remark}
\begin{proof}[Proof of Corollary \ref{stazcoroll}]
Thanks to Theorem \ref{stazteo} it is enough to prove that, if $H^{N+1}_{{\vv},t}$ is stable and $\vv$ has two non-null components $\vv_k,\vv_j$, then either $|v_k|=|v_j|$ (that is $\vv\in\vn{N+1}$), or $\mu$ is Gaussian.   
Thanks to condition (\ref{psi2}) we have
$$
 \psi''(x)= \psi''\Big(\tau-\alpha_ix\Big),\qquad\text{ for }i=k,j,
$$
for every $x\in\R$.
If $|\alpha_k|=1$, that is $\vv\in\vn{N}^{\pm}$, then by Theorem \ref{stazteo} conditions (\ref{33}) and (\ref{32}) follow.

On the other hand let us assume $|\alpha_k|=|\frac{\vv_k}{\vv_j}|\neq 1$ (by a change of variables we may assume $|\alpha_k|< 1$).
Iterating this relation, we get
$$
\psi''(x)=\psi''\Big(\tau\sum_{i=0}^n (-1)^i\alpha_k^i- \alpha_k^{n+1}\;x\Big),
$$
and, considering the limit as $n$ tends to infinity, it follows that $\psi''$ is constant and hence $\mu$ is Gaussian.
\end{proof}


\subsection{Stability of half-spaces}
Let us now study the stability of half-spaces for the $N$-fold product measures $\tau^N$.
Since stationarity is a necessary condition for  stability, we only have to analyse the cases which are mentioned in  Corollary \ref{stazcoroll}.
In particular we focus on the analysis to the case of log-concave measures.

We are going to prove that stability is strictly related to Poincar\'e type inequalities.
In particular in Theorem \ref{stabteocoordinate} we prove that the coordinate half space $\{x_{N}<t\}$ is stable for the $N$-product measure $\mu^N$ if $-\psi''(t)\le \la_\mu$ where $d\mu(x)=\ee^{\psi(x)}dx$.
Moreover in Theorem \ref{stabteononcoordinate} the non-coordinate case is treated. 
The $N$-dimensional problem in fact reduced to the 3-dimensional case and this follows by the tensorization of Poincar\'e inequalities with weights. 

We split the analysis in the two cases, which are treated in Section \ref{seccoordinate} and in Section \ref{secnoncoordinate} respectively.

\subsection{Stability of coordinate half-spaces}\label{seccoordinate}

\begin{theorem}\label{stabteocoordinate}
Let $\mu$ be a probability measure in $\R$, with support the whole real line: $d\mu(t)=\ee^{\psi(t)}dt$, $t\in\R$, with $\psi\in C^2(\R)$ and let  $\ee^{\phi(x)}$ be the density of its $(N+1)$-dimensional product measure. 
Denote by $\lambda_\mu$ the spectral gap of $\mu$.
For $t\in\R$,  let
$$
H^{N+1}_t=\left\{\ x\in\R^{N+1}\ :\ x_{N+1} < t\right\}.
$$
The half space $H^{N+1}_t$ is stable if and only if $-\psi''(t)\le \la_{\mu}$.
\end{theorem}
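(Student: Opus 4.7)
The plan is to reduce the stability functional in \eqref{stability} to a clean Poincar\'e-type inequality on $\R^N$ for the measure $\mu^N$, and then invoke the tensorization identity $\lambda_{\mu^N}=\lambda_\mu$ to conclude.

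First I would set up the geometric data. The boundary $\partial H^{N+1}_t=\R^N\times\{t\}$ is a hyperplane, so its mean curvature vanishes and $K^2\equiv 0$; the outer unit normal is the constant vector $e_{N+1}$. Since $\phi(x)=\sum_{i=1}^{N+1}\psi(x_i)$, the Hessian $D^2\phi$ is diagonal with entries $\psi''(x_i)$, and hence
\[
\langle D^2\phi(x)\nu(x),\nu(x)\rangle\Big|_{\partial H^{N+1}_t}=\psi''(t).
\]
Identifying $\partial H^{N+1}_t$ with $\R^N$ via the first $N$ coordinates, the surface density $e^{\phi}$ factors as $e^{\psi(t)}e^{\psi(x_1)}\cdots e^{\psi(x_N)}$, so the constant factor $e^{\psi(t)}$ can be pulled out of all the integrals. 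After this reduction, and using that $D_{\partial A}u=Du$ on a flat boundary, the stability condition \eqref{stability} becomes: for every $u\in C^\infty_0(\R^N)$ with $\int_{\R^N} u\,d\mu^N=0$,
\[
\int_{\R^N}|Du|^2\,d\mu^N+\psi''(t)\int_{\R^N} u^2\,d\mu^N\ge 0.
\]

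Next I would rewrite this as
\[
-\psi''(t)\int_{\R^N}u^2\,d\mu^N\le\int_{\R^N}|Du|^2\,d\mu^N,
\]
and split into two cases. If $-\psi''(t)\le 0$ the inequality is trivially satisfied, and at the same time the claimed condition $-\psi''(t)\le\lambda_\mu$ holds since $\lambda_\mu\ge 0$. If $-\psi''(t)>0$, the inequality is precisely a Poincar\'e inequality for $\mu^N$ with constant $-\psi''(t)$, which holds (for all mean-zero $u$) if and only if $-\psi''(t)\le\lambda_{\mu^N}$. By the classical tensorization property recalled at the beginning of Section~\ref{poincaresec}, $\lambda_{\mu^N}=\lambda_\mu$, and the equivalence follows.

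There is essentially no hard step: everything is a direct unpacking of the definition once the curvature terms are seen to vanish. The only point that requires a moment of care is the direction of the equivalence between stability and the inequality $-\psi''(t)\le\lambda_\mu$: one needs both that $\lambda_\mu$ is the largest admissible Poincar\'e constant (so that $-\psi''(t)>\lambda_\mu$ produces an explicit destabilizing test function via the standard approximation of an extremizer of \eqref{poinc} by compactly supported mean-zero functions on $\R^N$), and that tensorization preserves the sharp constant. Both facts are standard and were stated in Section~\ref{poincaresec}, so this yields the theorem.
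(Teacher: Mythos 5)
Your proposal is correct and follows essentially the same route as the paper: unpack condition~\eqref{stability} on the flat boundary (where $K^2\equiv 0$, the normal is $e_{N+1}$, and $\langle D^2\phi\,\nu,\nu\rangle=\psi''(t)$), factor out the constant $e^{\psi(t)}$, recognize the resulting inequality as a Poincar\'e inequality for $\mu^N$ with constant $-\psi''(t)$, and conclude via the tensorization identity $\lambda_{\mu^N}=\lambda_\mu$. The only difference is that you spell out the geometric simplifications and the case split on the sign of $-\psi''(t)$ more explicitly than the paper does, which the paper leaves implicit.
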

\begin{proof}
By condition (\ref{stability}), stability of $H_{t}^{N+1}$ is equivalent to $\qq_{\psi}(u)\ge 0$, 
for each function $u\in C^{\infty}_0(\R^N)$ such that 
\begin{equation}\label{0mean2}
\int_{\rn} u(x_1,...,x_N,t)\, d\mu^N(x_1,...,x_N)=0,   
\end{equation}
where
\begin{equation*}
\qq_{\psi}(u)= \int_{\R^N} \Big( |Du(x_1,...,x_N,t)|^2+u^2(x_1,...,x_N,t) \psi''(t) \Big) f(x_1)\cdot ... \cdot f(x_{N}) f(t)\ dx_1\cdots dx_{N}.
\end{equation*}  
Hence $H$ is stable if and only if for every function $u\in C^{\infty}_0(\rn)$ satisfying (\ref{0mean2}), it holds
$$
\int_{\R^{N}} |Du|^2 d{\mu^{N}} \ge -\psi''(t)\int_{\R^{N}} u^2 d{\mu^{N}}.
$$
Notice that this is a Poincar\'e inequality, hence it holds if and only if $-\psi''(t)\le \la_{\mu^N}$, and consequently, by the tensorization property of the \poinc inequality, if and only if $-\psi''(t)\le{\la_{{\mu}}}$.
\end{proof}

Notice that previous result is trivial in the case of Gaussian measures. 
Indeed in that case each half space is isoperimetric and hence stable.
Moreover the reverse holds true: a symmetric probability measure $\mu$ such that  all coordinate half-spaces are isoperimetric regions for $\mu^N$ is necessarily Gaussian (see \cite{BH2,KPS,O}).
We show next that the stability of all coordinate half-spaces also characterizes Gaussian measures.
\begin{theorem}\label{allcoord}
Let $\mu$ be a probability measure on $\R$, with $d\mu(x)=\ee^{-v(x)}\,dx$, $v\in C^2(\R)$.
Consider its product measure $\mun$, $N\ge 2$.
If for every $t\in \R$ the coordinate half-space
$$
H_t=\big\{x\in\rn\ :\ x_N<t\big\},
$$      
is stable for $\mun$, then $v$ is quadratic, that is $\mu$ is Gaussian.
\end{theorem}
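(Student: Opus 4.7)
The plan is to invoke Theorem~\ref{stabteocoordinate} to convert the stability hypothesis into a pointwise upper bound on $v''$, and then to combine this bound with the Poincar\'e inequality itself to force $v$ to be quadratic.

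\emph{Translation and a first upper bound on $\lambda_\mu$.} Applying Theorem~\ref{stabteocoordinate} (with its ``$N$'' replaced by $N-1\ge 1$, so that its $(N{+}1)$-fold product becomes our $\mu^N$) shows that $H_t$ is stable for $\mu^N$ if and only if $v''(t)\le \lambda_\mu$. Hence by hypothesis $v''(t)\le \lambda_\mu$ for every $t\in\R$. Integrating twice gives $e^{-v(t)}\ge e^{-v(0)-v'(0)t-(\lambda_\mu/2)t^2}$, so integrability of $e^{-v}$ forces $\lambda_\mu>0$. A positive spectral gap implies exponential concentration, so $\sigma^2:=\var{\mu}(\mathrm{id})<\infty$; write $m:=\int_{\R} t\, d\mu$. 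Testing the Poincar\'e inequality against $u(t)=t-m$ yields $\lambda_\mu\sigma^2\le 1$, i.e.\ $\lambda_\mu\le 1/\sigma^2$.

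\emph{Two integration-by-parts identities.} Integrating by parts (justified below) one obtains
\begin{equation*}
\int_\R v'(t)(t-m)\, d\mu(t)=1\qquad\text{and}\qquad \int_\R \bigl(v'(t)\bigr)^2 d\mu(t)=\int_\R v''(t)\, d\mu(t).
\end{equation*}
Applying Cauchy--Schwarz to the first identity gives $\int (v')^2 d\mu\ge 1/\sigma^2$, with equality exactly when $v'(t)$ is proportional to $(t-m)$; inserting the pointwise bound $v''\le \lambda_\mu\le 1/\sigma^2$ into the second identity gives the reverse $\int (v')^2 d\mu\le 1/\sigma^2$. Combining the two, equality holds in Cauchy--Schwarz, so $v'(t)=(t-m)/\sigma^2$ and therefore $v(t)=(t-m)^2/(2\sigma^2)+\mathrm{const}$; that is, $\mu$ is Gaussian.

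\emph{Main obstacle.} The only real technical step is the justification of the two integration-by-parts identities, both of which reduce to showing that the boundary terms $(t-m)e^{-v(t)}$ and $v'(t)e^{-v(t)}$ vanish at $\pm\infty$. The semi-concavity coming from $v''\le \lambda_\mu$ (so that $v$ cannot have localized downward spikes), together with the exponential tails of $\mu$ produced by the positive Poincar\'e constant, make this a matter of a standard cutoff-and-limit argument rather than a conceptual difficulty.
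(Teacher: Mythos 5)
Your proof is correct and uses the same computational core as the paper's: the two integration-by-parts identities $\int v'(t)(t-m)\,d\mu=1$ and $\int (v')^2\,d\mu=\int v''\,d\mu$, followed by Cauchy--Schwarz. The difference is only in how the relevant bound on $\int (t-m)^2\,d\mu$ is obtained. The paper bypasses the spectral gap entirely: it takes the stability inequality $v''(t)\int u^2\,d\mu^{N-1}\le \int|Du|^2\,d\mu^{N-1}$, tests it with $u(x)=x_1$, and reads off $\int x^2\,d\mu\le 1/\sup v''$; chaining Cauchy--Schwarz then gives $\sup v''\le \int v''\,d\mu$, whence $v''\equiv\sup v''$ by continuity. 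You instead invoke Theorem~\ref{stabteocoordinate} to get $v''\le\lambda_\mu$ pointwise and test the Poincar\'e inequality with $u(t)=t-m$ to get $\lambda_\mu\sigma^2\le 1$, squeezing $1/\sigma^2\le\int(v')^2\,d\mu=\int v''\,d\mu\le\lambda_\mu\le 1/\sigma^2$ and concluding via the Cauchy--Schwarz equality case. Your route is slightly longer (it relies on Theorem~\ref{stabteocoordinate} and a positive spectral gap rather than the raw stability functional), but it is equivalent in substance and the conclusion from the equality case is cleanly stated. One small note on economy: your appeal to $\lambda_\mu>0$ and exponential tails to get $\sigma^2<\infty$ can be shortcut as the paper does, since the bound $\int x^2\,d\mu\le 1/\sup v''$ follows directly from the stability inequality and already gives finiteness of the second moment without passing through concentration.
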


\begin{proof}
Let us point out that necessarily $\sup_{x\in\R}v''(x)>0$, as otherwise $-v$ would be a convex function and hence $\ee^{-v}$ could not be a probability density on $\R$.

Let us now consider the stability conditions for the coordinate half space $H_t$.
By (\ref{stability}), $H_t$ is stable if and only if for every $u\in C^{\infty}_0(\R^{N-1})$ such that $\int_{\R^{N-1}}u\,d\mu^{N-1}=0$ we have
$$
v''(t)\int_{\R^{N-1}}u^2 \, d\mu^{N-1} \le \int_{\R^{N-1}}|Du|^2 \, d\mu^{N-1}.
$$      
In particular it holds
$$
\int_{\R^{N-1}}u^2 \, d\mu^{N-1} \le \frac 1{\sup v''}\int_{\R^{N-1}}|Du|^2 \, d\mu^{N-1}.
$$
Using a density argument, we can apply the previous inequality to the one-variable function $u(x)=x$, seen as a function of $(N-1)$ variables, and we get
\begin{equation}\label{x}
\int_{\R}x^2 \, d\mu \le \frac 1{\sup v''}.     
\end{equation}
Using this last estimate, together with two integrations by parts and H\"older's inequality, we get
\begin{eqnarray*}
1&=& \int_{\R}xv'\ee^{-v}\,dx\le \left(\int_{\R}x^2\,d\mu\right)^{\frac 12}\left(\int_{\R}v'^2\, d\mu\right)^{\frac 12}\\
               &\le& \frac 1{(\sup v'')^{\frac 12}} \left( \int_{\R}v'^2\, d\mu \right)^{\frac 12}= \frac 1{(\sup v'')^{\frac 12}} \left(\int_{\R} v''\, d\mu\right)^{\frac 12},
\end{eqnarray*}
which gives
$$
\sup v'' \le \int_{\R} v''\, d\mu,
$$
and hence, thanks to the regularity of $v''$, $v''\equiv \sup v''$. 
Therefore $v''$ is constant that is  $\mu$ is Gaussian.
\end{proof}

\begin{remark}\label{la<v2}
The above argument actually shows that (provided $v''$ is $\mu$-integrable),
$$
\lambda_\mu\le \int_\R v''(x)\, d\mu(x).
$$
This is a well-known fact, for which we have given a naive proof for sake of completeness.
The conceptual proof consists in using the fact, related to the so-called $L^2$-method of H\"ormander, that $\lambda_\mu$ is also the biggest constant such that for all $u$,
$$
\lambda_\mu \int_\R u'^2(x) d\mu(x) \le \int_\R \big(u''^2(x)+v''(x) u'^2(x)\big)\, d\mu(x).
$$
It is then clear that
\begin{equation}\label{infv2}
\inf v'' \le \lambda_\mu \le \int_\R v''(x) \,d\mu(x),
\end{equation}
where the upper bound follows from approximations of  $u(x)=x$.
\end{remark}

The previous result can be used to establish the existence of stable coordinate half-spaces for a measure $d\mu(x)=\ee^{-v(x)}dx$ with $v\in C^2(\R)$.
More precisely the following holds.
\begin{proposition}
Let $\mu$ be a probability measure on $\R$ with density $\ee^{-v(x)}$, $v\in C^2(\R)$, and consider its product measure $\mu^{N+1}$.
There exists at least a real number $t$ such that the coordinate half-space $H_t=\{x_{N+1}\le t\}$ is stable for the measure $\mu^{N+1}$.
\end{proposition}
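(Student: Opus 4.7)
My plan is to invoke Theorem~\ref{stabteocoordinate}, which (taking $\psi=-v$) says that $H_t$ is stable for $\mu^{N+1}$ if and only if $v''(t)\le\lambda_\mu$; since $\{x_{N+1}\le t\}$ and $\{x_{N+1}<t\}$ differ by a $\mu^{N+1}$-null set, their stability is equivalent. So the task reduces to exhibiting one $t\in\R$ with $v''(t)\le\lambda_\mu$, and the main ingredient will be the lower bound $\inf_{\R} v''\le\lambda_\mu$ from Remark~\ref{la<v2}.

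First I would dispose of the easy sub-cases. If $v''$ takes a non-positive value at some $t_0$, then $v''(t_0)\le 0\le\lambda_\mu$ and $t_0$ works. If instead $v''>0$ everywhere, the measure $\mu$ is strictly log-concave, so by Bobkov's theorem $\lambda_\mu>0$. In this setting, if the infimum of $v''$ is attained at some $t^*$, then $v''(t^*)=\inf v''\le\lambda_\mu$ and we are done. If $\inf v''$ is not attained but $\inf v''<\lambda_\mu$ strictly, I would pick $\varepsilon=(\lambda_\mu-\inf v'')/2>0$ and use the definition of infimum to find $t$ with $v''(t)<\inf v''+\varepsilon<\lambda_\mu$.

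The main obstacle, and the only remaining sub-case, is $\inf v''=\lambda_\mu$ with $\inf v''$ not attained; this would mean $v''(x)>\lambda_\mu>0$ for every $x\in\R$ while $\mu$ is uniformly log-concave with parameter exactly $\lambda_\mu$. I would rule this out through the rigidity of the Bakry--\'Emery inequality: applying Brascamp--Lieb (\ref{BrLieb}) to a Poincar\'e eigenfunction of $\mu$ (or to a normalized minimizing sequence) and tracing the equality case, the identity $\lambda_\mu=\inf v''$ in the uniformly log-concave setting forces $v''\equiv\lambda_\mu$, so $\mu$ must be Gaussian. But then $v''$ is constant and $\inf v''$ is attained everywhere, contradicting the sub-case hypothesis. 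This completes the analysis and yields a stable coordinate half-space.
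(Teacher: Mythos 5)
Your case analysis and overall strategy coincide with the paper's: reduce to finding $t$ with $v''(t)\le\lambda_\mu$ via Theorem~\ref{stabteocoordinate}, settle the easy cases, and then rule out the critical case $\inf v''=\lambda_\mu>0$ by showing it forces $\mu$ to be Gaussian. However, your treatment of that critical case elides the two genuinely hard technical steps, which is exactly where the paper spends most of its effort.

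First, ``applying Brascamp--Lieb to a Poincar\'e eigenfunction (or to a normalized minimizing sequence)'' is not a rigorous reduction: you cannot trace the equality case of Brascamp--Lieb along a mere minimizing sequence, so you really do need a genuine extremizer for the Poincar\'e inequality. Its existence is non-automatic; the paper obtains it by invoking the Bakry--\'Emery criterion (which gives a log-Sobolev inequality from $\inf v''>0$), then Wang's theorem ensuring empty essential spectrum for the generator $Lf=f''-v'f'$, whence a first nonzero eigenfunction $u$ exists. Second, even with $u$ in hand, the chain $\int u'^2\,d\mu\ge\inf v''\int u'^2/v''\,d\mu\ge\inf v''\int u^2\,d\mu$ becomes an equality, but equality in the first step only gives $v''=\inf v''$ $\mu$-a.e.\ on the set $\{u'\neq 0\}$. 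To conclude $v''\equiv\inf v''$ one must additionally prove $\mu(\{u'=0\})=0$, which the paper does via a Sturm--Liouville oscillation argument applied to the Euler--Lagrange equation and an accumulation-point analysis. Your ``rigidity of Bakry--\'Emery'' phrasing sweeps both of these under the rug; as written, the key case is asserted rather than proved.
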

\begin{proof}
As shown in Theorem \ref{stabteocoordinate} a sufficient condition for the stability of $H_t$ is $v''(t)\le \la_\mu$, where $\la_\mu$ is the best constant in the Poincar\'e inequality and hence it is well defined and non-negative.
We want to show that inequality (\ref{infv2}) implies the existence of at least on stable coordinate half-space.

Notice that this is obvious in the case $\inf_{x\in\R}v''(x)<0$; hence we may assume $\inf v''\ge 0$ that is, $\mu$ log-concave.
Consequently $\la_\mu>0$ (see \cite{B2}) whence the case $\inf v'' =0$ follows immediately.
Finally we assume
\begin{equation}\label{c>0}
\inf_{x\in\R} v''(x)= c>0.
\end{equation}
Moreover by (\ref{infv2}) we may assume $\la_\mu=\inf v''$, as the case $\inf v''< \la_\mu$ trivially implies the assert of the proposition. In other words we have
\begin{equation}\label{28}
\inf v''=\inf_{\substack{u\in\wud{\mu}(\R)\\\int_\R ud\mu=0}} \dfrac{\int_\R u'^2\;d\mu}{\int_\R u^2d\mu}=c>0,
\end{equation}
and we are going to show that this latter case occurs only for the Gaussian measure.
We first notice that the infimum in (\ref{28}) is attained at some function $u$ (Indeed thanks to (\ref{c>0}) and the Bakry-Emery criterion 
\cite{BE}, $\mu$ satisfies 
a logarithmic Sobolev inequality. Hence the work of Wang \cite{W} ensures that the operator $L$ defined by $Lf=f''-v'f'$ has an empty essential spectrum. It follows easily that it has a pure point spectrum and one my choose $u$ as an eigenfunction for the first non-zero eigenvalue of $-L$ ).
Consequently
$$
\inf v''=\displaystyle \dfrac{\int_\R u'^2\;d\mu}{\int_\R u^2d\mu}\ge \  \dfrac{\inf v''\;\displaystyle\int_\R{\displaystyle \dfrac{u'^2}{v''}\;d\mu}}{\int_\R u^2d\mu}\ge \inf v'',
$$
where the last inequality follows from the Brascamp-Lieb inequality (\ref{BrLieb}).
This entails
\begin{equation}\label{v2C}
\int_\R u'^2d\mu=\inf v''\; \int_\R \frac{u'^2}{v''}d\mu.
\end{equation}
We claim that
$$
\mu(\{x\in\R\ :\ u'(x)=0\}) = 0,
$$
which implies, by (\ref{v2C}), $v''(x)=\inf v''$ $\mu$-almost everywhere and then, by the regularity of $v$, that $v''$ is constant, that is the measure $\mu$ is Gaussian.

Indeed, let us define $a,b\in\mathbb{R}\cup\{\pm\infty\}$ as
$$
a=\inf\{x\in\R\;:\;u'(x)=0\},\qquad b=\sup\{x\in\R\;:\;u'(x)=0\},
$$
and let $\{a_k\},\{b_k\}$ be approximating sequences such that $a_k$ converges to $a$ and $b_k$ converges to $b$ as $k$ tends to infinity, with $u'(a_k)=u'(b_k)=0$ for every $k\in\N$.
By classical results for Sturm-Liouville problems (see \cite{H}), applied to the Euler-Lagrange equation associated to the minimum Problem (\ref{28}), the set $\{x\in\R\ :\ u(x)=0\}\cap(a_k,b_k),$ is finite for every $k\in\N$.
This implies in particular that $U_k=\{x\in\R\;:\; u'(x)=0\}\cap(a_k,b_k)$ has zero measure, with respect to $\mu$,
for every $k$. Indeed, we show that $U_k$ only has a finite number of accumulation points.
Assume $U_k$ admits an accumulation point $\bar x$, then $u'(\bar x)=0$ and $u''(\bar x)=0$, since we can choose a sequence $x_k\in U_k$, with $x_k\to \bar x$, and it holds
$$
\frac{u'(\bar x)-u'(x_k)}{\bar x-x_k}=0, \qquad\text{for every }k\in\N.
$$
By the Euler-Lagrange equation this implies that also $u(\bar x)=0$ and hence $U_k$ has only a finite number of accumulation points, for every $k\in\N$.
This shows in particular that each $U_k$ has zero $\mu$-measure which gives $\mu(\{u'=0\})=0$.
%
\end{proof}

\subsection{Stability of non-coordinate half-spaces}\label{secnoncoordinate}

\begin{theorem}\label{stabteononcoordinate}
Let $\mu$ be an even log-concave measure in $\R$, with support the whole real line: $d\mu(t)=\ee^{\psi(t)}dt$, $t\in\R$, with $\psi\in C^2(\R)$, $\psi''<0$ in $\R$, and let  $\ee^{\phi(x)}$ be the density of its $(N+1)$-dimensional product measure. Moreover, denote by $\lambda_\mu$ the spectral gap of $\mu$.
For $\vv\in \sn{N}$ and $t\in\R$,  let
$$
H^{N+1}_{\vv,t}=\left\{\ x\in\R^{N+1}\ :\ \langle x,\vv \rangle < t\right\}=\left\{x\in\R^{N+1}\ :\ \sum_{i=1}^{N+1} x_i\vv_i< t\right\}.
$$
If $\mu$ is not Gaussian and $\vv\not\in\vn{N}$, then $H^{N+1}_{\vv,t}$ is not stable.

For $\vv\in\vn{N}$, the half space $H^{N+1}_{{\vv},t}$ is stable if and only if so is $H^3_{{\vv},t}$.
Moreover if $H^3_{{\vv},t}$ is stable, then so is $H^2_{{\vv},t}$.
\end{theorem}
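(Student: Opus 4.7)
The statement decomposes into three claims and I would address them in order. The first assertion---that for $\mu$ non-Gaussian and $\vv\notin\vn{N}$ the half-space $H^{N+1}_{\vv,t}$ is not stable---follows from the fact that stationarity is necessary for stability, combined with Corollary \ref{stazcoroll}: for a non-Gaussian measure the only non-coordinate stationary directions belong to $\vn{N}^+\cup\vn{N}^-=\vn{N}$, so if $\vv\notin\vn{N}$ (and $\vv$ is non-coordinate, the standing setting of this subsection) stationarity already fails.

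For the equivalence between stability in dimension $N+1$ and in dimension $3$---where $H^3_{\vv,t}$ refers to the projected half-space involving only the two non-null components of $\vv$---my plan is to perform an orthogonal change of variables in the two coordinates $x_N,x_{N+1}$ carrying the non-null components of $\vv$. With $|\vv_N|=|\vv_{N+1}|=1/\sqrt 2$, set $y=\vv_N x_N+\vv_{N+1}x_{N+1}$ and $z=\vv_{N+1}x_N-\vv_N x_{N+1}$; the hyperplane $\bd H^{N+1}_{\vv,t}=\{y=t\}$ is then parametrized by $(x_1,\dots,x_{N-1},z)\in\R^N$. The crucial point, supplied by the stationarity cases (\ref{32})--(\ref{33}) of Corollary \ref{stazcoroll}, is that on this hyperplane the weight
$$
-\bigl\langle D^2\phi(x)\vv,\vv\bigr\rangle=-\tfrac 12\bigl(\psi''(x_N)+\psi''(x_{N+1})\bigr)
$$
depends only on $z$, and is strictly positive since $\psi''<0$; call it $\Theta(z)$. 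The restriction of $\mu^{N+1}$ to the hyperplane factors, up to a global normalization, as $\mu^{N-1}\otimes\nu$, where $\nu$ is a probability measure on $\R$ with density built from $\ee^{\psi(x_N(z))+\psi(x_{N+1}(z))}$. In these variables the stability inequality for $H^{N+1}_{\vv,t}$ is exactly the hypothesis $(P_\Theta)$ of Theorem \ref{teoPoincpeso}, applied with $\tau=\mu$ and $M=N-1$.

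Theorem \ref{teoPoincpeso} then states that $(P_\Theta)$ is equivalent to the two one-dimensional conditions (\ref{p1}) and (\ref{p2}), which involve only $\nu$, $\Theta$, and $\lambda_\mu$ and are in particular independent of $M$. Consequently, stability of $H^{N+1}_{\vv,t}$ is characterized by the same pair of inequalities for every $N\ge 2$, which yields the equivalence with stability of $H^3_{\vv,t}$ (the case $M=1$). For the final implication, $\bd H^2_{\vv,t}$ is one-dimensional with coordinate $z$, and the stability inequality reduces to $\int w^2\Theta\,d\nu\le\int w'^2\,d\nu$ for mean-zero $w$, i.e.\ precisely condition (\ref{p1}); since stability of $H^3_{\vv,t}$ yields both (\ref{p1}) and (\ref{p2}), it implies stability of $H^2_{\vv,t}$. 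The main obstacle I anticipate is the bookkeeping needed to verify, separately in the $\vv\in\vn{N}^+$ and $\vv\in\vn{N}^-$ sub-cases, that the weight is indeed a function of $z$ alone and that the restricted measure genuinely splits as $\mu^{N-1}\otimes\nu$; once this setup is in place, Theorem \ref{teoPoincpeso} handles the dimension-free part automatically.
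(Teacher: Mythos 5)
Your proposal is correct and follows essentially the same route as the paper: reduce to stationarity via Corollary \ref{stazcoroll}, parametrize the hyperplane isometrically so the restricted measure splits as $\mu^{N-1}\otimes\nu$ with a positive weight $\Theta$ (positivity from $\psi''<0$), identify the stability inequality with condition $(P_\Theta)$ of Theorem \ref{teoPoincpeso}, observe that conditions (\ref{p1})–(\ref{p2}) are independent of $M=N-1$, and note that the planar case is exactly (\ref{p1}). The only minor imprecision is attributing the fact that $\Theta$ depends on $z$ alone to stationarity—this is automatic once $y=t$ is fixed—whereas stationarity is what allows the further simplification $\Theta(z)=-\psi''$ at a single point; this does not affect the argument.
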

\begin{proof}
Notice that, as stationarity is a necessary condition for stability, by Theorem \ref{stazteo} we have that  $H^{N+1}_{\vv,t}$ can be stable only if $\vv\in\vn{N}$.

By condition (\ref{stability}), stability of $H_{\vv,t}^{N+1}$ is equivalent to
\begin{equation}\label{Qge0}
\qq_{\psi}(u)= \int_{\bd H_{\vv,t}^{N+1}} \left( |Du|^2+u^2 \langle D^2\phi\, \vv;\vv\rangle) \right) \ff\,\ d x\ge 0,
\end{equation}
for every function $u\in C^{\infty}_0(\bd  H_{\vv,t}^{N+1})$ such that
\begin{equation}\label{0mean}
\int_{\bd  H_{\vv,t}^{N+1}} u\, \ff\, dx=0,
\end{equation}
where  $\ff=\ee^{\phi(x)}$ is the density of $\mu^{N+1}$, that is $\ff(x_1,...,x_N)=f(x_1)\cdots f(x_{N+1})$.
By definition of product measure, $D^2\phi(x)_{ij}=\delta_{ij}\psi''(x_i)$, for every $x=(x_1,...,x_{N+1})\in\R^{N+1}$.

We define $\tau=\sqrt{2}t$ and $\pp=\vv_N/\vv_{N+1}$; hence $\pp\in\{\pm 1\}$.
By the symmetry of $\mu$, it is enough to prove the statement for the half-spaces $H^{N+1}_{\tilde{\vv},t}$ and $H^3_{\tilde{\vv},t}$ with $\tilde{\vv}$ either in $\vn{N}^+$ (corresponding to $\pp=1$) or in $\vn{N}^-$ (case $\pp=-1$).
We are going to show that the $(N+1)$ dimensional problem is equivalent to the case $(N+1)=3$.
In order to apply condition (\ref{Qge0}) we choose the following volume preserving parametrization of $\bd  H_{\vv,t}^{N+1}$. Let $p:\R^{N}\to\R^{N+1}$ be defined as
$$
p(x_1,...,x_N)= (x_1,...,\frac{x_N}{\sqrt{2}},\tau-\pp\frac{x_N}{\sqrt{2}});
$$
hence $p(\R^{N})=\bd  H_{\vv,t}^{N+1}$ and
$$
\left| \frac{\partial p}{\partial x_1} \wedge ... \wedge \frac{\partial p}{\partial x_N}\right|\equiv 1.
$$
Then by (\ref{Qge0}),(\ref{0mean}), $H^{N+1}_{\tilde{\vv},0}$ is stable if and only if
\begin{equation}\label{stabv}
\int_{\rn} \left( \left|Du(x_1,...,x_N)\right|^2+u^2(x_1,...,x_N)\, \psi''(x_{N}/\sqrt{2}) \right) \fg(x_1,...,x_{N})\ dx \ge 0,
\end{equation}
for every function $u(x)=u(x_1,...,x_N)\in C^{\infty}_0(\rn)$, such that
$$
\int_{\rn} u(x)\fg(x_1,...,x_N)\,dx_1...dx_N =0,
$$
where
$$
\fg(x_1,...,x_N)=f(x_1)\cdots f(x_{N-1})\,f(x_{N}/\sqrt{2})\,f(\tau-\pp x_N/\sqrt{2}).
$$
Define the measure $d\mu_{\pp}(x)=f(\tau-\pp x/\sqrt{2})dx$. The previous stability condition reads as a Poincar\'e type inequality for the product measure $d\mu^{N-1}(x_1,...,x_{N-1})\cdot d\mu_{\pp}(x_N)$ with the additional weight $(-\psi''(x_N/\sqrt{2}))$.
More precisely $H^{N+1}_{\tilde{\vv},t}$ is stable if and only if
$$
\int_{\rn} u(x,y)\ d\mu^{N-1}(x_1,...,x_{N-1}) d\mu_{\pp}(y)=0,
$$
implies
$$
\int_{\rn}  \left|Du(x,y)\right|^2   d\mu^{N-1}(x) d\mu_{\pp}(y) \ge \int_{\rn} u^2(x)\big(-\psi''(y/\sqrt{2})\big) d\mu^{N-1}(x) d\mu_{\pp}(y).
$$
In the case $N-1\ge 1$ this is condition (\ref{ptheta}) of Theorem \ref{teoPoincpeso}, applied to the measures $\mu^{N-1}, \mu_{\pp}$ with weight $\Theta(y)=(-\psi''(y/\sqrt{2}))$.
Therefore for $N+1\ge 3$ the stability of $H^{N+1}_{\tilde{\vv},t}$ is equivalent to (\ref{ptheta}) and hence, by Theorem \ref{teoPoincpeso}, it is also equivalent to the following: for every $v\in C^{\infty}_0(\R)$
\begin{eqnarray}
\int_{\R}v(y) d\mu_{\pp}(y)=0\text{ implies } \int_\R v'^2(y) d\mu_{\pp}(y)\ge \int_\R v^2(y)(-\psi''(y/\sqrt{2})) d\mu_{\pp}(y) \label{stab1}\\
\text{ and }\int_\R v^2(y) (-\psi''(y/\sqrt{2})) d\mu_{\pp} \le \la_{\mu_{\pp}}\, \int_\R v^2(y)  d\mu_{\pp}(y)+ \int_\R v'^2(y) d\mu_{\pp}(y).\label{stab2}
\end{eqnarray}
Hence, as conditions (\ref{stab1}) and (\ref{stab2}) do not depend on $N$, the first assertion is proved.

Let us now analyse the case $N+1=2$.
The 2-dimensional half-space $\{x_1+\pp x_2\le\tau\}$ is stable for $\mu^2$ if and only if
$$
\int_{-\infty}^{+\infty} v^2(y)(-\psi''(y/\sqrt{2})) d\mu_{\pp}(y)\le \int_{-\infty}^{+\infty} v'^2(y)d\mu_{\pp}(y),
$$
for every function $v$ such that $\int_{-\infty}^{+\infty} v(y)d\mu_{\pp}(y)=0$, that is condition (\ref{stab1}), and hence the theorem is proved.
\end{proof}

\section{Examples}\label{sec:ex}
In this section, we analyse the stability of hyperplanes for three classes of examples, showing that in the
classification given in the previous sections all possibilities may occur.
For the first set of examples, the only stable half-spaces are coordinate half-spaces. The second example is given by the logistic distribution; in this case the only non-coordinate stable hyperplanes are bisector lines in two dimensions. As this specific example
enjoys many remarkable properties, we are able to push the study a bit further. 
Eventually, we give a third set of examples: namely non-Gaussian measures for which stable non-coordinate hyperplanes exist in any dimensions.

\subsection{Measures with no non-coordinate stable half-spaces}
Let us consider a non-Gaussian probability measure on the real line $d\mu(t)=e^{\psi(t)}dt$ where $\psi$ is twice continuously differentiable.
For simplicity we will also assume that $\psi$ and $\psi''$ are even and have no other point of symmetry that 0
(in particular $\psi''$ is not periodic). This ensures that the only non-coordinate stationary hyperplanes for $\mu^2$
contain the origin and are orthogonal to a vector in $\vn{1}$.
Consider the measure
$$
d\nu(t)= e^{2\psi(t/\sqrt2)} dt =e^{-\varphi(t)} dt.
$$
The stability of the bisector lines $x_1=\pm x_2$ for $\nu$
is equivalent to the following functional inequality: for all $u:\R\to \R$ with $\int_\R u\,  d\nu=0$,
\begin{equation}\label{eq:stab2d}
 \int_\R u^2(x) \varphi''(x) \, d\nu(x) \le \int_\R u'^2(x) d\nu(x).
\end{equation}
Note that when $\psi$ is convex, i.e. $\mu$ is a log-concave probability measure, then $\nu$ is also a finite log-concave
measure and Brascamp and Lieb inequality (\ref{BrLieb}) assures that for all functions
 $u:\R\to \R$ with $\int_\R u(x)\,  d\nu(x)=0$,
\begin{equation}\label{eq:BL}
 \int_\R u^2(x) d\nu(x) \le \int_\R \frac{u'^2(x)}{\varphi''(x)}\,  d\nu(x).
\end{equation}
Despite of the striking similarity between \eqref{eq:stab2d} and \eqref{eq:BL}, the former may be false for log-concave finite measures.
This is easily seen with power type potentials $d\nu_p(x)=\ee^{-|x|^p}dx$ for $p>2$.
Indeed the function $u(x)=x\in\wud{\nu_p}(\R)$ does not satisfy inequality (\ref{eq:stab2d}) for the measure $\mu=\nu_p$.
More precisely the following holds.
\begin{proposition}
Consider the measure $\nu_p$ with $d\nu_p(x)=\ee^{-|x|^p}dx$, $p>2$, and let $\nu_p^N$ be its $N$-dimensional product measure. Each possible stable half-space for $\nu_p^N$ is necessarily a coordinate one.
\end{proposition}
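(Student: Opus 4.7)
The plan is to combine the classification of stationary half-spaces from Corollary \ref{stazcoroll} with the dimensional reduction for stability provided by Theorem \ref{stabteononcoordinate}, and then to contradict the two-dimensional stability inequality \eqref{eq:stab2d} by inserting the linear test function $u(x)=x$, following the hint given in the paragraph preceding the proposition.

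First I would pin down the stationary non-coordinate half-spaces of $\nu_p^N$. Writing $\psi(x)=-|x|^p$ (up to the additive log-normalization), we have $\psi''(x)=-p(p-1)|x|^{p-2}$, which is even and, for $p>2$, unbounded at infinity. In particular $\psi''$ is not periodic, and an even function symmetric with respect to a second point would automatically be periodic. Since $\nu_p$ is not Gaussian, Corollary \ref{stazcoroll} then forces every non-coordinate stationary half-space of $\nu_p^N$ to have direction $\vv\in\vn{N-1}$ and to pass through the origin (i.e.\ $t=0$).

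Next, by Theorem \ref{stabteononcoordinate}, for such $\vv$ the stability of $H^N_{\vv,0}$ for $\nu_p^N$ is equivalent to that of $H^3_{\vv,0}$ for $\nu_p^3$, and the latter in turn implies the stability of the two-dimensional bisector $H^2_{\vv,0}$ for $\nu_p^2$. Thus it suffices to disprove stability in dimension two.

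By the discussion preceding the proposition, stability of the 2D bisector is equivalent to \eqref{eq:stab2d} for $d\nu(x)=e^{-\varphi(x)}\,dx$ with $\varphi(x)=2^{1-p/2}|x|^p$, so $\varphi''(x)=2^{1-p/2}p(p-1)|x|^{p-2}$. I would plug in $u(x)=x$: the zero-mean condition $\int x\,d\nu=0$ holds by symmetry, the right-hand side of \eqref{eq:stab2d} is $\nu(\R)$, and one integration by parts gives $\int x\,\varphi'(x)\,e^{-\varphi(x)}\,dx=\nu(\R)$; since $x\varphi'(x)=2^{1-p/2}p\,|x|^p$, this yields $\int|x|^p\,d\nu=2^{p/2-1}p^{-1}\,\nu(\R)$. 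Substituting into the left-hand side of \eqref{eq:stab2d} gives $\int u^2\,\varphi''\,d\nu=(p-1)\,\nu(\R)$, which strictly exceeds $\nu(\R)$ for $p>2$. Hence \eqref{eq:stab2d} fails, the bisector is not stable, and by the reduction above no non-coordinate stationary half-space of $\nu_p^N$ is stable. The only substantive step is the stationarity classification; the test-function computation is essentially a single integration by parts, so no serious obstacle is anticipated.
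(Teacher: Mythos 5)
Your proof is correct and follows essentially the same route as the paper's: classify stationary non-coordinate half-spaces (you cite Corollary \ref{stazcoroll} where the paper invokes Theorem \ref{stazteo}, which amounts to the same thing and correctly forces $t=0$), reduce to dimension two via Theorem \ref{stabteononcoordinate}, and contradict \eqref{eq:stab2d} with the test function $u(x)=x$. The only difference is cosmetic but welcome: you evaluate the moment $\int |x|^p\,d\nu$ by a single integration by parts rather than by an explicit Gamma-function calculation, which is cleaner, gives the transparent conclusion $(p-1)\nu(\R)\le\nu(\R)$, and in fact sidesteps a small constant slip in the paper's displayed formula (which is missing a factor $1/p$, i.e.\ should read $\Gamma(1+\frac1p)$ rather than $\Gamma(\frac1p)$, though this does not affect the sign).
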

\begin{proof}
Notice that, thanks to Theorem \ref{stazteo} the only possible stable non-coordinate half-space is $H^N_{\vv,0}$ with $\vv\in \vn{N}^{\pm}$.
We are going to show that $H^2_{\vv,0}=\{x_1\pm x_{2}\le 0 \}$ is not stable and hence by Theorem \ref{stabteononcoordinate} the conclusion of the proposition follows.
By the stability conditions (\ref{stabv}), we have that $H^2_{\vv,0}$ is stable for $\nu_p^2$ if and only if for every function $u\in\wud{\nu_p}(\R)$ such that $\int_\R u(x)d\nu_p(x)=0$, inequality (\ref{eq:stab2d}) holds, where $-\varphi(x)=2^{-\frac{p-2}2}|x|^p$.
Consider $u(x)=x\in\wud{\nu_p}(\R)$; as it is an odd function it has zero mean with respect to the measure $\nu_p$.
Moreover it does not satisfy inequality (\ref{eq:stab2d}) since we have
$$
2^{-\frac{p-2}2}\int_\R u^2\; (|x|^p)'' \ee^{-\frac 2{\sqrt{2}^{p}}|x|^p}dx-\int_\R u'^2\; \ee^{-\frac 2{\sqrt{2}^{p}}|x|^p}dx=2^{\frac 32}2^{-\frac 1p}\,(p-2)\ \Gamma\Big(\frac 1p\Big)>0,
$$
where $\Gamma(\cdot)$ indicates the Gamma function. Hence for the measure $\nu_p^2$, with $p>2$, there is no non-coordinate half-space which is stable and the same happens for the $N$-fold tensorized measure $\nu_p^N$ by Theorem \ref{stabteononcoordinate}.
\end{proof}

\subsection{The logistic measure}
Let us denote by $\lo$ the \logi measure on $\R$, $d\lo(x)=f(x)\, dx=e^{-V(x)}dx$, where
$$
f(x)=\frac{\ee^x}{(1+\ee^x)^2}, \quad x\in \R.
$$
The \logi measure is a symmetric log-concave measure with 
exponential tails, moreover $V''(x)=2f(x) >0$ for every $x\in\R$, with $\inf V''=0$.
We are going to prove that $\lo$ satisfies inequality (\ref{isoGeneric}), finding an explicit value for the constant $\C{\lo}$. Moreover we will show that the $N$-times product $\lo^N$ has stable non-coordinate half-spaces only in dimension 2.


\subsubsection{Spectral gap of the logistic measure}

\begin{proposition}\label{calcolola}
 The best constant in the Poincar\'e inequality (\ref{poinc}) for  the \logi measure on $\R$ is
$$
\la_{\lo}=\frac 14\cdot
$$      
\end{proposition}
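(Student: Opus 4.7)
The plan is to prove $\la_\lo\le 1/4$ and $\la_\lo\ge 1/4$ separately.

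For the upper bound I would test the Rayleigh quotient on the odd functions $u_n(x)=\sinh(x/2)\chi_n(x)$, with $\chi_n$ a smooth even cutoff equal to $1$ on $[-n,n]$ and supported in $[-n-1,n+1]$. Using the identities $f(x)=\tfrac14\text{sech}^2(x/2)$ and hence $\sinh^2(x/2)f(x)=\tfrac14\tanh^2(x/2)$, one gets $\int u_n^2\,d\lo=n/2+O(1)$; differentiating $u_n$ and using $\cosh^2(x/2)f(x)=1/4$ yields
\[
  \int (u_n')^2\,d\lo=\tfrac{1}{16}\int_\R\chi_n^2\,dx+O(1)=\tfrac{n}{8}+O(1).
\]
Thus the Rayleigh quotient tends to $1/4$ as $n\to\infty$; the oddness of $u_n$ together with the symmetry of $\lo$ guarantees $\int u_n\,d\lo=0$, as required.

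For the lower bound I would apply the ground-state transform. Writing $d\lo=\ee^{-V}dx$ with $V(x)=2\log(1+\ee^x)-x$, one has $V'(x)=\tanh(x/2)$ and $V''(x)=2f(x)$. The substitution $u=\ee^{V/2}w$ and one integration by parts give
\[
  \int u^2\,d\lo=\int_\R w^2\,dx,\qquad \int (u')^2\,d\lo=\int_\R\bigl((w')^2+W(x)w^2\bigr)dx,
\]
with
\[
  W(x)=\frac{V'(x)^2}{4}-\frac{V''(x)}{2}=\frac14-2f(x).
\]
Moreover $\int u\,d\lo=0$ rewrites as $w\perp\psi_0$ in $L^2(\R,dx)$, where $\psi_0:=\ee^{-V/2}\propto\text{sech}(x/2)$. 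Setting $H:=-\partial_x^2+W$, a short check gives $H\psi_0=0$, so that $\la_\lo$ equals the infimum of $\langle Hw,w\rangle/\|w\|_2^2$ over $w\perp\psi_0$, and the problem is reduced to identifying the spectrum of $H$ on $\psi_0^\perp$.

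Next, the rescaling $t=x/2$ shows that $H-\tfrac14$ is unitarily equivalent to $\tfrac14 K$, where $K:=-\partial_t^2-2\,\text{sech}^2(t)$ is the P\"oschl--Teller Hamiltonian. I would then exhibit the factorization $K+1=A^*A$ with $A:=\partial_t+\tanh(t)$ (easily checked using $\tanh'=\text{sech}^2$ and $\tanh^2+\text{sech}^2=1$). This forces $K\ge-1$, with the unique $L^2$ solution of $A\psi_K=0$ being $\psi_K(t)\propto\text{sech}(t)$, realising the eigenvalue $-1$. The supersymmetric partner $AA^*=-\partial_t^2+1$ is a shifted free Laplacian with spectrum $[1,\infty)$, and the intertwining $AK=(AA^*-1)A$ sends any eigenvector $\phi$ of $K$ with eigenvalue $\mu\neq -1$ to an $L^2$-eigenfunction $A\phi$ of $-\partial_t^2$ with eigenvalue $\mu$, which is impossible since the free Laplacian on $\R$ has no point spectrum. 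Together with Weyl's essential-spectrum theorem (applicable since $\text{sech}^2(t)\to 0$ at infinity) this gives $\sigma(K)=\{-1\}\cup[0,\infty)$, and hence $\sigma(H)=\{0\}\cup[\tfrac14,\infty)$. Since $\ker H=\R\psi_0$, the infimum of $\sigma(H)$ on $\psi_0^\perp$ is exactly $1/4$.

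The main obstacle I anticipate is the spectral claim about $K$: rigorously one has to rule out not only discrete eigenvalues but also any singular continuous spectrum in the interval $(-1,0)$. This can be handled either by quoting the completely explicit scattering theory of the P\"oschl--Teller potential, or by supplementing the intertwining computation with standard short-range Schr\"odinger arguments (exponential decay of bound states, limiting absorption).
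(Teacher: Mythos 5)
Your proof is correct, and while it shares with the paper the crucial ground-state transform (the paper's substitution $w(x)=u(x)/\cosh(x/2)$ is exactly your $w=\psi_0 u$, since $\psi_0\propto\mathrm{sech}(x/2)$), the way you then extract the lower bound is genuinely different. The paper works on finite intervals $[-b,b]$: after replacing the mean-zero constraint by $u(0)=0$, it takes a Sturm--Liouville minimizer, and compares it with the explicit (non-$L^2$) solution $\sinh(x/2)$ of the Euler equation at $\la=\tfrac14$ via a Wronskian sign argument, concluding $\la_0^b\ge\tfrac14$. You instead keep the orthogonality constraint $w\perp\psi_0$, recognise $H-\tfrac14$ as (a rescaling of) the $n=1$ P\"oschl--Teller operator $K=-\partial_t^2-2\,\mathrm{sech}^2 t$, and read off $\sigma(K)=\{-1\}\cup[0,\infty)$ from the supersymmetric factorization $K+1=A^*A$, the partner $AA^*=-\partial_t^2+1$, and Weyl's theorem. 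Your route yields the full spectrum of $H$ and so pins down $\la_\lo$ directly (making the separate upper bound a consistency check), at the cost of invoking more operator-theoretic machinery; the paper's Wronskian comparison is more elementary and self-contained. The upper bounds are essentially identical: the paper tests with $u_\ep=\sinh((1-\ep)x/2)$, you with $\sinh(x/2)$ times a cutoff, both driving the Rayleigh quotient to $\tfrac14$.

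One reassurance on the obstacle you flag: since $(-1,0)$ lies strictly below $\sigma_{\mathrm{ess}}(K)=[0,\infty)$, any spectrum of $K$ in $(-1,0)$ would automatically consist of isolated eigenvalues of finite multiplicity; there is no possibility of singular continuous spectrum below the essential spectrum. Thus the intertwining argument ruling out eigenvalues in $(-1,0)$, together with $K\ge -1$ from the factorization and Weyl's theorem for $[0,\infty)$, already closes the argument, and no appeal to explicit P\"oschl--Teller scattering or limiting-absorption estimates is needed.
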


\begin{proof}
In order to compute $\la_{\lo}$, we will first show that $\la_{\lo}\ge \frac 14$, and then show that equality holds using an approximation method.

Since $\va(u+c)=\va(u)$ for all real constants $c$ and $\va(u)\le \|u\|_{L^2_{\lo}}$, we have
\begin{eqnarray}\label{lalo}    
\la_{\lo} &=& \inf_{u\in \wud{\lo}(\R)} \frac{\int_{\R} u'^2(x)d\lo(x)}{\va(u)}=
                \inf_{\substack{u\in \wud{\lo}(\R) \\ u(0)=0}} \frac{\int_{\R} u'^2(x)d\lo(x)}{\va(u)} \\
      &\ge& \inf_{\substack{u\in \wud{\lo}(\R) \\ u(0)=0}} \frac{\int_{\R} u'^2(x)d\lo(x)}{\int_{\R}u^2(x)d\lo(x)}.\nonumber
\end{eqnarray}
Notice that
$$
\inf_{\substack{u\in \wud{\lo}(\R)\\ u(0)=0}} \frac{\int_{\R} u'^2(x)d\lo(x)}{\int_{\R }u^2(x)d\lo(x)} =
     \inf_{\substack{u\in \wud{\lo}(\R)\\ u(0)=0}} \lim_{b\to \infty} \frac{\int_{-b}^b u'^2(x)d\lo(x)}{\int_{-b}^b u^2(x)d\lo(x)}.
$$
Hence to prove that $\la_{\lo}\ge\frac 14$ it is sufficient to show that
\begin{equation*}
\la_0^b =  \inf_{\substack{u\in \wud{\lo}([-b,b])\\ u(0)=0}} \frac{\int_{-b}^b u'^2(x)d\lo(x)}{\int_{-b}^b u^2(x)d\lo(x)}\ge \frac 14,
\end{equation*}
for every $b\in \R$.
Let us denote by $J(v)$ the functional defined on  $\wwb=\{ u\in \wud{\lo}([-b,b]),\ u(0)=0, u\not\equiv 0 \}$:
$$
J(u) = \frac{\int_{-b}^b u'^2(x)d\lo(x)}{\int_{-b}^b u^2(x)d\lo(x)};
$$
since $J(u)=J(|u|)$, we may assume $u\ge 0$.
For every $b\in\R$ the density $f$ of $\lo$ is bounded from above and below in $[-b,b]$ by two positive constants, so that we can apply standard arguments of compactness and lower semi-continuity in Sobolev spaces, and obtain the existence of a minimum for $J$ in $\wwb$.
Let $u$ be a minimizing function, i.e. $u\in\wwb$, and $J(u)=\la_0^b$.
We can now deduce the Euler equation for the above minimum problem and we obtain that $u$ solves the following
\begin{equation}\label{pb1}
\begin{cases}
u''(x)-V'(x)u'(x) &=-\la_0^b u(x), \qquad x\in [-b,b]\\ 
     u(0)=0 &\\
     u'(\pm b)=0, &
\end{cases}
\end{equation}
where $V'(x)= \tanh(x/2)$.
Since $V'$ is odd, without loss of generality we may assume that $u$ is even and hence we reduce the study of (\ref{pb1}) to the interval $[0,b]$ (indeed, for each function $u$ solving (\ref{pb1}), $u(x)+u(-x)$ is again a solution).
Moreover, problem (\ref{pb1}) can be seen as the Sturm-Liouville eigenvalues problem
\begin{equation}\label{pb2}
\begin{cases}
w''(x) +\Big( \frac 1{2\cosh^2(\frac x2)}+\la -\frac 14 \Big)w(x)=0, \qquad x\in [0,b]\\        
     w(0)=0 & \\
     w'(b) + \frac 12\tanh(\frac b2) w(b)= 0, &
\end{cases}
\end{equation}
corresponding to the eigenvalue $\la=\la_0^b$, via the transformation $w(x)= \frac{u(x)}{\cosh(\frac x2)}$.
It is known that there exist countably many values of $\la$ such that (\ref{pb2}) has a (non trivial) solution.
In particular $\la_0^b$ is the smallest positive one, and hence we may assume $w$ to be positive in $(0,b)$ (see for example \cite{H}) which implies the positivity of $u$ in $(0,b)$ too.

Notice that $\sinh(\frac x2)$ solves the equation in (\ref{pb1}) with $\la_0^b=\frac 14$, but it does not belong to $\wud{\lo}(\R)$, and hence it can not be a solution of the problem in the whole $\R$.
However, using an heuristic argument, it can give an idea of the behaviour of the possible eigenvalues.
In fact, roughly speaking, an eigenfunction has as much oscillations, as bigger is its corresponding eigenvalue (see again \cite{H}); hence, since $\sinh(x/2)$ does not oscillate at all in $(0,+\infty)$, its corresponding constant $\frac 14$ can be seen as a lower bound for the possible eigenvalues.
Hence we can expect $\la_0^b\ge\frac 14$.

More precisely, let us consider the Wronskian determinant of $u$ and $\sinh(x/2)$:
$$
\textbf{W}(x)=u'(x)\sinh(x/2)-\frac 12 u(x)\cosh(x/2).
$$
It solves the following differential equation,
$$
\textbf{W}'(x)=\textbf{W}(x)\,\tanh(x/2)+\Big(\frac 14-\la_0^b\Big) u(x)\sinh(x/2),
$$
with $\textbf{W}(0)=0$.
Hence
$$
\textbf{W}(x)=(\frac 14-\lab^b)\cosh(x/2)\left(\int_0^xu(y)\tanh(y/2)\,dy\right),
$$
and its sign depends only on the sign of $(\frac 14-\la_0^b)$.
As $\textbf{W}(b)<0$, we must have $\frac 14 -\la_0^b\le 0$.
Hence $\la_0^b\ge \frac 14$, which implies $\la_{\lo}\ge \frac 14$.

In order to complete the proof we show that $\frac 14$ is in fact the infimum of (\ref{lalo}).
Indeed, consider the sequence
$$
u_{\ep}(x)=\sinh\left(\frac x2(1-\ep)\right);
$$
notice that $u_{\ep}\in\wud{\lo}(\R)$ for every $\ep>0$.
Moreover, $J(u_{\ep})$ (for $b=\infty$) converges to $\frac 14$, as $\ep$ tends to zero, as
$$
\|u'_{\ep}\|^2_{\rm{L}^2_{\lo}(\R)}=\frac{(1-\ep)^2}{4}\left(1+\|u_{\ep}\|^2_{\rm{L}^2_{\lo}(\R)} \right).
$$
\end{proof}
\begin{remark}
Notice that equality in the Poincar\'e inequality can not hold, that is, $\frac 14$ is not a minimum of $J$ on $\R$.
Indeed, if there exists a function ${v}$ for which equality holds in (\ref{poinc}) with $\tau=\lo$, $\la=\frac 14$, then $v$ would be a solution of the differential equation in (\ref{pb1}) on $\R$ with $\la_0^b=\frac 14$) and $v(0)=0$. But this implies that $v$ is a non zero multiple of $\sinh(x/2)$ in $\R$ and hence $v\not\in \rm{L}^2_{\lo}(\R)$.
\end{remark}

\subsubsection{Dimension-free isoperimetric inequalities for $\lo$}
Since $\lo$  has a log-concave density, a  result of Bobkov \cite{B1} ensures that half-lines solve the \isopp for the \logi measure.
Moreover its distribution function $F(x):=F_\mu(x)=e^x/(1+e^x)$ verifies a differential equation of the logistic type: $F'=F(1-F)$.
It follows that for all $t\in[0,1]$,
$$
\I{\lo}(t)=t(1-t).
$$

The starting point to prove a dimension-free \isopi  for the \logi measure  is the following result,
which originates from the paper \cite{L} but incorporates a numerical improvement given in \cite{RW}
 (see also the proof of Theorem 19 in \cite{BCR1}).
\begin{theorem}\label{bcr}
Let $\tau$ be a probability measure on $\rn$, with $d\tau(x)=\ee^{-V(x)}\,dx$, $V\in C^2(\rn)$ and $D^2V\ge 0$ on $\rn$.
For any Borel set $A\subseteq \rn$ one has
\begin{equation}\label{BCR}
\bm{\tau}(\bd A)\ge \sqrt{\la_{\tau}}\ \cc\ \tau(A)\left( 1-\tau(A) \right),    
\end{equation}
where $\cc=\sup_{u\ge 0} \frac{1-\ee^{-2u}}{2\sqrt{u}}> 0,45125$.
\end{theorem}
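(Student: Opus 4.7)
The plan is to use Ledoux's semigroup method, coupling three classical consequences of the curvature--dimension condition $CD(0,\infty)$ (which is exactly $D^2V\ge 0$) with the Poincar\'e inequality. Let $L=\Delta-\nabla V\cdot\nabla$ generate the symmetric diffusion semigroup $(P_t)_{t\ge 0}$ reversible with respect to $\tau$. By a standard mollification/enlargement argument it suffices to prove the inequality when $\mathbf{1}_A$ is replaced by a smooth $[0,1]$-valued function $f$ with $\int f\, d\tau\to\tau(A)=:a$ and $\int|\nabla f|\, d\tau\to\bm{\tau}(\bd A)$; set $g_t:=P_t f$. The three ingredients are: \emph{(a)} the spectral gap, which gives $\frac{d}{dt}\var{\tau}(g_t)=-2\int|\nabla g_t|^2\, d\tau\le -2\la_\tau\var{\tau}(g_t)$, hence $\var{\tau}(g_t)\le a(1-a)e^{-2\la_\tau t}$; \emph{(b)} the pointwise gradient commutation $|\nabla P_tf|\le P_t|\nabla f|$ valid under $CD(0,\infty)$, which after integration and invariance of $\tau$ yields $\int|\nabla g_t|\, d\tau\le\bm{\tau}(\bd A)$ in the limit; \emph{(c)} the reverse local Poincar\'e inequality $P_t(f^2)-(P_tf)^2\ge 2t|\nabla P_tf|^2$, which for $f\in[0,1]$ gives the pointwise bound $|\nabla g_t|^2\le g_t(1-g_t)/(2t)\le 1/(8t)$, and therefore $\|\nabla g_t\|_\infty\le 1/(2\sqrt{2t})$.

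Combining \emph{(b)} and \emph{(c)} one obtains
$$
\int|\nabla g_t|^2\, d\tau\le\|\nabla g_t\|_\infty\int|\nabla g_t|\, d\tau\le\frac{\bm{\tau}(\bd A)}{2\sqrt{2t}}.
$$
Integrating the identity $\frac{d}{ds}\int g_s^2\, d\tau=-2\int|\nabla g_s|^2\, d\tau$ on $[0,t]$ gives
$$
a(1-a)-\var{\tau}(g_t)=2\int_0^t\int|\nabla g_s|^2\, d\tau\, ds\le\sqrt{2t}\,\bm{\tau}(\bd A),
$$
and bounding the left side from below via \emph{(a)} produces
$$
\bm{\tau}(\bd A)\ge\frac{a(1-a)\bigl(1-e^{-2\la_\tau t}\bigr)}{\sqrt{2t}}=\sqrt{\la_\tau}\,a(1-a)\,\frac{1-e^{-2u}}{\sqrt{2u}},\qquad u:=\la_\tau t.
$$
Taking the supremum over $u>0$ yields an inequality of the announced form; the optimiser satisfies $e^{2u^{*}}=1+4u^{*}$, from which the numerical lower bound $\cc>0{,}45125$ follows by elementary calculus. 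The precise shape of the constant $\cc=\sup_{u\ge 0}(1-e^{-2u})/(2\sqrt u)$ in the statement reflects the sharper bookkeeping of constants that is the numerical refinement credited to \cite{RW}.

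I expect the main obstacle to be purely technical, namely the limiting procedure from smooth $f$ to $f=\mathbf{1}_A$: ingredients \emph{(b)} and \emph{(c)} are formulated for smooth functions, so one has to regularise $\mathbf{1}_A$ in a way that simultaneously (i) preserves the $CD(0,\infty)$ gradient estimates for $P_t f_\ep$ uniformly in $\ep$, and (ii) allows $\int|\nabla f_\ep|\, d\tau$ to converge to the Minkowski content $\bm{\tau}(\bd A)$. Standard approximations (slight enlargements of $A$ combined with a Lipschitz cut-off, or convolution with a smooth kernel) handle this, but it is here that one has to be careful; once this limiting step is justified, the rest of the proof is the clean quantitative combination of the three ingredients above.
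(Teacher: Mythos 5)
The paper does not prove Theorem~\ref{bcr}; it cites it, attributing the inequality to Ledoux~\cite{L} with the numerical refinement of~\cite{RW}, and points to Theorem~19 of~\cite{BCR1} for a proof. Your proposal reconstructs exactly this Ledoux-type semigroup argument (variance decay via the spectral gap, $|\nabla P_tf|\le P_t|\nabla f|$ under $D^2V\ge 0$, and the reverse local Poincar\'e bound $\|\nabla P_tf\|_\infty\le 1/(2\sqrt{2t})$ for $f\in[0,1]$), and the computation is sound; the limiting step from smooth $f$ to $\mathbf{1}_A$ that you flag is the standard one and is handled as you indicate. One remark on the constant: your chain of inequalities actually produces $\sup_{u>0}\frac{1-\ee^{-2u}}{\sqrt{2u}}=\sqrt{2}\,\cc$, which is strictly larger than the $\cc=\sup_{u>0}\frac{1-\ee^{-2u}}{2\sqrt{u}}$ appearing in the statement; so your argument proves the theorem a fortiori, and your closing sentence has the direction of the discrepancy backwards --- it is not that the stated $\cc$ is sharper bookkeeping than yours, but that your use of the reverse local Poincar\'e inequality gives a better $L^\infty$ gradient bound than what the cited sources exploit. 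This is harmless for the purpose at hand, since $\sqrt{2}\,\cc>\cc$, but you should note it explicitly rather than gloss over the mismatch.
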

Notice that (\ref{BCR}) can also be formulated as
$$
\I{\tau}(t)\ge \sqrt{\la_{\tau}}\ \cc\ t\left( 1-t \right) = \sqrt{\la_{\tau}}\ \cc\ \I{\lo}(t);
$$
this may be readily applied to the powers of the logistic distribution.
\begin{theorem}\label{teoisoplogi}
Let $\lo$ be the \logi measure on $\R$. For every $N\ge 1$ and every $t\in[0,1]$ it holds
\begin{equation}\label{isoplo}
\I{\lon}(t) \ge \frac {\cc}2\ \I{\lo}(t),
\end{equation}  
where $\cc$ is the constant in (\ref{BCR}).
\end{theorem}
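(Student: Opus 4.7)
The plan is to apply Theorem \ref{bcr} directly to the product measure $\lon$. For this I need to verify three things: (i) $\lon$ has a density $\ee^{-\tilde V}$ with $\tilde V\in C^2(\R^N)$, (ii) $D^2\tilde V\ge 0$ on $\R^N$, and (iii) the spectral gap $\la_{\lon}$ can be computed explicitly.

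For (i) and (ii), write the one-dimensional potential as $V(x)=-\log f(x)=-x+2\log(1+\ee^x)$, which is smooth with $V''(x)=2f(x)>0$, as already noted in the paper. The product density is $\ee^{-\tilde V(x_1,\dots,x_N)}$ with $\tilde V(x)=\sum_{i=1}^N V(x_i)$, so $D^2\tilde V$ is the diagonal matrix with entries $V''(x_i)>0$. In particular $D^2\tilde V\ge 0$ pointwise, meeting the hypotheses of Theorem \ref{bcr}. For (iii), the classical tensorization property of the Poincar\'e inequality together with Proposition \ref{calcolola} gives $\la_{\lon}=\la_\lo=\tfrac14$.

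Plugging these into the conclusion of Theorem \ref{bcr}, for every Borel $A\subseteq\R^N$ one obtains
$$
\bm{\lon}(\bd A)\ \ge\ \sqrt{\la_{\lon}}\,\cc\,\lon(A)\bigl(1-\lon(A)\bigr)\ =\ \frac{\cc}{2}\,\lon(A)\bigl(1-\lon(A)\bigr).
$$
Since $\I{\lo}(t)=t(1-t)$ (recall the computation preceding Theorem \ref{bcr}, which uses the logistic distribution function $F$ and the relation $F'=F(1-F)$), taking the infimum over Borel sets $A$ with $\lon(A)=t$ yields $\I{\lon}(t)\ge (\cc/2)\I{\lo}(t)$, which is exactly \eqref{isoplo}.

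There is no genuine obstacle here: once the two prior results (Theorem \ref{bcr} and Proposition \ref{calcolola}) are available, the proof reduces to checking that the product potential has a nonnegative Hessian and that tensorization yields $\la_{\lon}=1/4$. The only minor point worth stating explicitly is that the constant on the right-hand side, $\cc/2=\sqrt{\la_\lo}\,\cc$, is dimension-free precisely because the spectral gap does not deteriorate under tensorization.
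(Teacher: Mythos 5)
Your proposal is correct and follows the same route as the paper: apply Theorem~\ref{bcr} to the log-concave product measure $\lo^N$, then use tensorization of the Poincar\'e inequality together with Proposition~\ref{calcolola} to substitute $\lambda_{\lo^N}=\lambda_{\lo}=\tfrac14$. The only difference is that you spell out the verification of the hypotheses of Theorem~\ref{bcr} (smoothness and nonnegativity of the Hessian of the product potential), which the paper leaves implicit.
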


\begin{proof}
A direct application of Theorem \ref{bcr} yields
\begin{equation*}
\I{\lon}\ge \sqrt{\la_{\lon}}\ \cc\ \I{\lo}.
\end{equation*}
The tensorization property of the \poinc inequality, $\la_{\lo^N}=\la_{\lo}$, and Proposition \ref{calcolola}  guarantee that $\la_{\lo}=\frac 14$, which gives the claim.
\end{proof}

Recall the notation $
\I{\mu^{\infty}}(t) = \inf_N\I{\mu^N}(t).
$ This so-called infinite dimensional isoperimetric function is not easily estimated.
We have proved so far that
$$\I\lo\ge \I{\lo^{\infty}} \ge \frac{\cc}{2} \;\I\lo.$$
A similar estimate is given in  \cite{BH} for the symmetric exponential distribution instead of $\lo$, and with the constant $\frac 1{2\sqrt{6}}$ instead of $\frac {\cc}2$. Observe that
 $\frac {\cc}2> \frac1{2\sqrt{6}}$.
The upper-bound on the infinite dimensional isoperimetric profile can be estimated thanks to the following
well-known observation.
\begin{proposition}\label{muinfinity}
Let $\mu$ be a probability measure on $\R$ having a density (with respect to the Lebesgue measure) and finite third moment. Let $\gamma$ be the standard Gaussian measure on $\R$. Let $\sigma^2= \var{\mu}(x)$. Then for every $t\in[0,1]$ it holds
\begin{equation}\label{asympt}
\I{\mu^{\infty}}(t) \le \frac 1{\sigma}\ \I{\gamma}(t).
\end{equation}
\end{proposition}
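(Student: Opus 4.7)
The plan is to bound $\I{\mu^N}(t)$ from above by testing on half-spaces orthogonal to the diagonal direction and then to invoke the central limit theorem. Let $m=\int_{\R}x\,d\mu(x)$, and for each $N\ge 1$ and $s\in\R$ consider
\begin{equation*}
H_N(s)=\Big\{x\in\R^N\,:\,\tfrac{1}{\sqrt N}\sum_{i=1}^N(x_i-m)\le s\Big\},
\end{equation*}
whose boundary is an affine hyperplane orthogonal to the unit vector $(1,\dots,1)/\sqrt N$. Writing $X_1,\dots,X_N$ for i.i.d.\ variables of law $\mu$ and $S_N=\tfrac{1}{\sqrt N}\sum_{i=1}^N(X_i-m)$, a direct computation of the Minkowski content yields $\mu^N(H_N(s))=F_N(s)$ and $\bm{\mu^N}(\bd H_N(s))=g_N(s)$, where $F_N$ and $g_N$ denote respectively the distribution function and the density of $S_N$. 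By definition of the isoperimetric profile this gives
\begin{equation*}
\I{\mu^N}\big(F_N(s)\big)\le g_N(s)\qquad\text{for every }s\in\R.
\end{equation*}

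Next I would apply two probabilistic limit theorems. Since $X_i-m$ has variance $\sigma^2$, the classical central limit theorem, combined with the fact that uniform convergence of distribution functions to a continuous limit is automatic, gives $F_N\to G$ uniformly on $\R$, where $G(s)=\Phi(s/\sigma)$ and $\Phi$ is the standard Gaussian CDF. The hypothesis that $\mu$ has a density and a finite third moment then triggers the \emph{local} central limit theorem (in the Gnedenko form), which yields $g_N\to G'=\tfrac{1}{\sigma}\varphi(\cdot/\sigma)$ uniformly on compact subsets of $\R$, with $\varphi=\Phi'$.

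To conclude, fix $t\in(0,1)$ and set $s^\ast=\sigma\Phi^{-1}(t)$. For $N$ large enough the continuous function $F_N$ attains the value $t$ at some point $s_N$, and the uniform convergence $F_N\to G$ forces $s_N\to s^\ast$. Specializing the previous bound to $s_N$,
\begin{equation*}
\I{\mu^N}(t)=\I{\mu^N}\big(F_N(s_N)\big)\le g_N(s_N)\longrightarrow G'(s^\ast)=\tfrac{1}{\sigma}\varphi(\Phi^{-1}(t))=\tfrac{1}{\sigma}\I{\gamma}(t),
\end{equation*}
as $N\to\infty$. Taking the infimum over $N$ yields (\ref{asympt}); the cases $t\in\{0,1\}$ are trivial since both sides vanish.

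The main obstacle is the control of the boundary measures $g_N(s_N)$: weak convergence of the $F_N$ only governs the measures of the half-spaces themselves, not their perimeters, so one genuinely needs a local CLT to compare the \emph{densities} at a prescribed point. This is precisely where the smoothness hypotheses on $\mu$ (existence of a density together with a finite third moment) enter the argument.
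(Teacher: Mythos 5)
Your proof is correct and follows essentially the same route as the paper: test on half-spaces orthogonal to the diagonal, use the local limit theorem for densities to control the Minkowski content, and use CLT-type control (you invoke Polya's uniform-convergence theorem where the paper invokes Berry--Esseen) to locate the level set $s_N$; both yield $s_N\to\sigma\Phi^{-1}(t)$ and hence the same limit.
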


\begin{proof}
Notice that, since the \isopf is invariant with respect to translations of the measure, without loss of generality, we may assume
$
\int x\, d\mu(x) =0.
$
For any random variable $X$ with law $\mu$ and density $f_X$, we denote by $F_X$ its distribution function (that is $F_X(t)=\mu(X\le t)$).
On the probability space $(\R^N,\mu^N)$ the coordinate functions  $X_1,...,X_N$ can be viewed as  independent random variables with common law $\mu$.
Define
$$
\zn= \sum_{i=1}^N \frac{X_i}{\sqrt{N}}.
$$
Let us indicate by $H_y$ the half plane
$$
H_y= \left\{ x\in\rn\ |\ \sum_{i=1}^N \frac{x_i}{\sqrt{N}} \le y \right\}.
$$
We have $\mu^N(H_y)= F_{\zn}(y)$ and hence $\bm{\mu}(\bd H_y)=f_{\zn}(y)$.

Fix $t\in[0,1]$; for every $N\ge 1$ consider $y_N$ such that $\mu^N(H_{y_N}) = t$.
Hence for every $N\ge 1$ it holds
$$
\I{\mu^N}(t) \le \bm{\mu^N}(\bd H_{y_N}) = f_{\zn}(y_N).
$$
By the Local Limit Theorem for densities (see \cite{P})and the Berry-Esseen inequality (see \cite{C}),
\begin{eqnarray*}
&&| f_{\zn}(y_N) -  \frac 1{\sigma\sqrt{2\pi}} \ee^{-\frac{y_N^2}{2 \sigma^2}}| \to 0,\\
&& |\frac {y_N}{\sigma} - \fii^{-1}(t)| \to 0,
\end{eqnarray*}
where $\fii$ is the distribution function of the standard Gaussian measure. Hence
$$
f_{\zn}(y_N) \to \frac 1{\sigma} \fii'(\fii^{-1}(t)).
$$
As a conclusion, recalling inequality (\ref{ison}) it holds:
$$
 \I{\mu^{\infty}}(y)=\inf_{N} \I{\mu^N}(y) \le \frac 1{\sigma} \fii'(\fii^{-1}(y))= \frac 1{\sigma}\ \I{\gamma}(y).
$$
\end{proof}

Eventually  we get, for all $t\in[0,1],$
$$
\min\left\{ \I{\lo}(t),\frac{\sqrt{3}}{\pi}\I{\gamma}(t) \right\} \ge \I{\lo^{\infty}}(t)\ge \frac {\cc}2\ \I{\lo}(t),
$$
where the upper bound is slightly better than $\I{\lo} \ge \I{\lo^{\infty}}$, which consists essentially in testing the isoperimetric inequality on  coordinate half-spaces.
Notice that inequality (\ref{asympt}) comes out of evaluating the boundary measure of specific half-spaces in large dimension.
Getting better upper estimates on $\I{\lo^{\infty}}$ requires better test sets for which one can compute the measure and estimate the boundary measure.
A natural candidate would be the stable half-planes (see Section \ref{secstablogi}) $H=\{(x_1,x_2)\in\R^2:\, x_1\le x_2\}$; unfortunately, since $\bm{\lo^2}(H)=\frac{\sqrt{2}}6>\frac{\sqrt{3}}{\pi}\I{\gamma}(1/2)$, it does not give a better result.
%
\begin{center}
\begin{figure}[h]
\begin{tikzpicture}[x=8cm,y=8cm]
\draw (-0.05,0)--(1.05,0);\draw (0,0) node[below]{$0$}; \draw (1,0) node[below]{$1$};
\newcommand{\disegnologi}{(0,0) to[out=60,in=210] (0.15, 0.13) to[out=30,in=180] (0.5, 0.21) to[out=0,in=150] (0.85, 0.13) to[out=330,in=120] (1, 0)}
\newcommand{\illogi}{[domain=0:1] plot({\x}, {(\x)*(1-\x)})}
\begin{scope}
\clip \illogi;
\fill[gray!30] \disegnologi;
\end{scope}
\fill[white] [domain=0:1] plot({\x}, {9*\x*(1-\x)/40});
\draw[blue, dashed] \disegnologi;
\draw[blue, ->, dashed] (1.05,0.12) node[right]{\small$\frac{\sqrt{3}}{\pi}\I{\gamma}(t)$}--(0.9,0.12);
\draw[blue!40!black!100!, thick] [domain=0:1] plot({\x}, {(\x)*(1-\x)});
\draw[blue!40!black!100!, ->] (1.05,0.24) node[right]{\small$\I{\lo}(t)$}--(0.65,0.24);
\draw[blue!60!black!100!, thick] [domain=0:1] plot({\x}, {9*\x*(1-\x)/40});
\draw[->,blue!60!black!100!] (1.05,0.03) node[right]{\small$\frac {\cc}2\ \I{\lo}(t)$} --(0.9,0.03);
\end{tikzpicture}
\caption{The region where $\I{\lo^{\infty}}$ lives}\label{quasiregFigGen}
\end{figure}
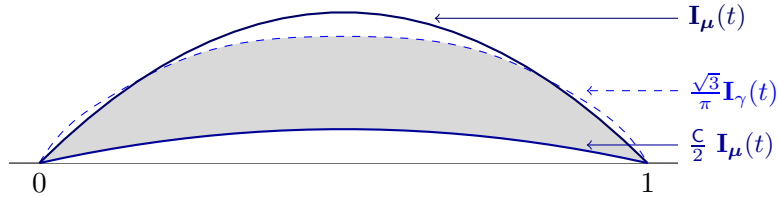
\end{center}

\subsubsection{Stationarity and stability of half-spaces for the logistic measure}\label{secstablogi}

As corollaries of Theorem \ref{stazteo} and Theorem \ref{stabteocoordinate}, Theorem \ref{stabteononcoordinate} we can give a description of half-spaces which are stationary and stable for the \logi measure, respectively.
Consider the half-space $H^{N+1}_{{\vv},t}=\{x\in\R^{N+1}\ :\ \langle x; \vv\rangle< t\}$; the following holds.
\begin{proposition}
The half-space $H^{N+1}_{{\vv},t}$ is stationary for the \logi measure if and only if it is either a coordinate half-spaces or $\vv\in\vn{N}$ and $t=0$.
\end{proposition}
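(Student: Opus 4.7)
The plan is to invoke Corollary~\ref{stazcoroll} directly and then rule out the cases that do not apply to the logistic measure $\lo$. First I would write down $\psi$ and $\psi''$ explicitly: using $1+e^x = 2e^{x/2}\cosh(x/2)$, the density becomes $f(x)=1/(4\cosh^2(x/2))$, so $\psi(x)=-\log 4-2\log\cosh(x/2)$ and
\begin{equation*}
\psi''(x)=-\frac{1}{2\cosh^2(x/2)}.
\end{equation*}
This function is strictly negative, even, attains its strict minimum $-1/2$ at the origin, is strictly monotone on $[0,+\infty)$, and tends to $0$ as $|x|\to\infty$.

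For the ``if'' direction, a coordinate half-space falls under item (\ref{31}) of Corollary~\ref{stazcoroll}, so it is stationary. For $\vv\in\vn{N}$ with $t=0$, the requirements in (\ref{32}) and (\ref{33}) degenerate: the period $\tau=\sqrt{2}t$ is zero (every function is $0$-periodic), and the required axis of symmetry $\tau/2=0$ coincides with the genuine axis of symmetry of the even function $\psi''$. Hence $H^{N+1}_{\vv,0}$ is stationary for every $\vv\in\vn{N}$.

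For the converse, I would assume $H^{N+1}_{\vv,t}$ is stationary and $\vv$ is not a coordinate vector. Since $\lo$ is not Gaussian, case (\ref{34}) of Corollary~\ref{stazcoroll} is excluded, so we are in case (\ref{32}) or (\ref{33}) and in particular $\vv\in\vn{N}$. It remains to force $t=0$. In case (\ref{32}), $\psi''$ would be $(\sqrt 2 t)$-periodic; but $\psi''(x)\to 0$ at infinity and $\psi''\not\equiv 0$, ruling out any nonzero period. In case (\ref{33}), $\psi''$ would be symmetric about $\pm\sqrt{2}t/2$; the strict monotonicity of $\cosh^2(x/2)$ on $[0,+\infty)$ combined with the evenness of $\psi''$ shows that the origin is the unique axis of symmetry of $\psi''$, so $\sqrt{2}t/2=0$.

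No genuine obstacle is expected: the only substantive verifications are the two qualitative properties of $\psi''$ for $\lo$ (non-periodicity and uniqueness of the axis of symmetry), and both follow immediately from the explicit closed form.
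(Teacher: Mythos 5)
Your proof is correct and takes the same route as the paper, which simply remarks that the statement ``immediately follows by Theorem~\ref{stazteo}'' (equivalently by Corollary~\ref{stazcoroll}). You have merely filled in the short verification that $\psi''(x)=-\tfrac{1}{2\cosh^2(x/2)}$ is nonconstant with limit $0$ at infinity (hence non-periodic) and is even and strictly monotone on $[0,\infty)$ (hence its only axis of symmetry is the origin), which pins down $t=0$ in the non-coordinate cases.
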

The proof immediately follows by Theorem \ref{stazteo}.

\begin{theorem}
A coordinate half-space $H^{N+1}_{t}$ is stable if $|t| \ge 2\log(2+\sqrt{3})$.
A non-coordinate half-space $H^{N+1}_{\vv,t}$ is stable   if and only if $N+1=2$, $t=0$ and  $\vv\in\vn{N}$.
\end{theorem}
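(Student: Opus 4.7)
The plan is to specialize the reduction theorems of Section \ref{secstab} to the logistic measure, relying on the explicit spectral gap $\lambda_\lo=1/4$ (Proposition \ref{calcolola}) and the identity $V''=(1-V'^2)/2$ satisfied by the potential $V=-\log f$, with $V'(x)=\tanh(x/2)$ and $V''(x)=1/(2\cosh^2(x/2))$. The coordinate claim follows immediately from Theorem \ref{stabteocoordinate}: stability of $H^{N+1}_t$ is equivalent to $-\psi''(t)\le\lambda_\lo$, i.e.\ $V''(t)\le 1/4$, which rearranges to $(\ee^t)^2-6\ee^t+1\ge 0$. Since $\cosh(\log(2+\sqrt 3))=2$, the threshold $|t|\ge 2\log(2+\sqrt 3)$ forces $V''(t)\le 1/8\le 1/4$, yielding the stated sufficient condition.

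For the non-coordinate part I first narrow down stationary candidates via Corollary \ref{stazcoroll}: since $\psi''(x)=-1/(2\cosh^2(x/2))$ is even, non-constant and non-periodic, and $\lo$ is not Gaussian, stationarity forces $t=0$ and $\vv\in\vn{N}$ (as recorded in the proposition just above). Theorem \ref{stabteononcoordinate} then reduces the stability analysis to the pair of conditions (\ref{stab1}) and (\ref{stab2}) (with $\Theta(y)=-\psi''(y/\sqrt 2)$) for $N+1\ge 3$, and to (\ref{stab1}) alone for $N+1=2$.

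To rule out stability for $N+1\ge 3$, I test (\ref{stab2}) with a compactly supported approximation of $v\equiv 1$. Since $\mu_\pp$ is a positive multiple of the pushforward of $\lo$ by $z\mapsto\sqrt 2\,z$, a scaling argument gives $\lambda_{\mu_\pp}=\lambda_\lo/2=1/8$, and the explicit value $\int f^2\,dx=1/6$ yields, after change of variables,
\[
\int(-\psi''(y/\sqrt 2))\,d\mu_\pp(y) \;=\; 2\sqrt 2\!\int f^2\,dx \;=\; \tfrac{\sqrt 2}{3}, \qquad \lambda_{\mu_\pp}\,\mu_\pp(\R)=\tfrac{\sqrt 2}{8}.
\]
Since $\sqrt 2/3>\sqrt 2/8$, condition (\ref{stab2}) fails and no non-coordinate half-space is stable in dimension at least $3$.

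The main technical obstacle is the stability for $N+1=2$. The change of variable $z=y/\sqrt 2$ reduces (\ref{stab1}) to the sharp weighted Poincaré-type inequality
\[
\int w'^2\,d\lo \;\ge\; 2\!\int w^2\,V''\,d\lo, \qquad w\in C^\infty_0(\R),\ \int w\,d\lo=0.
\]
Integration by parts against $V'\ee^{-V}$ gives $2\int ww'V'\,d\lo=\int w^2 V'^2\,d\lo-\int w^2 V''\,d\lo$, whence the identity $\int(w'-wV')^2\,d\lo=\int w'^2\,d\lo+\int w^2 V''\,d\lo$, which rewrites the target as $\int(w'-wV')^2\,d\lo\ge 3\int w^2 V''\,d\lo$. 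To close the proof I intend to run a Sturm--Liouville argument in the spirit of the proof of Proposition \ref{calcolola}: using $V'''=-V'V''$, a direct computation shows that $w=V'=\tanh(x/2)$ (which satisfies $\int V'\,d\lo=0$ by symmetry and changes sign only at $0$) is the first non-trivial eigenfunction of the associated weighted eigenvalue problem with eigenvalue exactly $2$, pinning down $2$ as the sharp spectral constant and realizing equality at $w=V'$.
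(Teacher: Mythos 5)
Your coordinate-half-space argument is correct and is essentially what the paper does (Theorem \ref{stabteocoordinate} with $\lambda_\lo=1/4$ and $V''(t)=1/(2\cosh^2(t/2))$, so $V''(2\log(2+\sqrt3))=1/8<1/4$).

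The non-coordinate part has a genuine gap, stemming from a misreading of $\mu_\pp$. Tracing the reduction in Theorem \ref{stabteononcoordinate}: the hyperplane density $\fg(x_1,\dots,x_N)=f(x_1)\cdots f(x_{N-1})\,f(x_N/\sqrt2)\,f(\tau-\pp x_N/\sqrt2)$ factors as $d\mu^{N-1}\otimes d\mu_\pp$ only if $\mu_\pp$ has density $f(y/\sqrt2)\,f(\tau-\pp y/\sqrt2)$ — at $\tau=0$, that is $f^2(y/\sqrt2)$, \emph{not} $f(y/\sqrt2)$ as you use. (The paper's displayed definition of $\mu_\pp$ drops the first factor, but its own 2D reduction to $\int u^2(-\psi'')\ee^{2\psi}\le\tfrac12\int u'^2\ee^{2\psi}$, i.e. a weighted inequality against $f^2$, shows what is meant.) Moreover, by Theorem \ref{teoPoincpeso} the constant appearing in condition \eqref{stab2} must be $\lambda_\tau=\lambda_\mu=\lambda_\lo$ — the spectral gap of the tensor factor — not $\lambda_{\mu_\pp}$; indeed the paper's proof uses $\lambda_\lo$. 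Your computation $\lambda_{\mu_\pp}=1/8$ and the comparison $\sqrt2/3>\sqrt2/8$ therefore tests a different inequality than the one the theorem requires. It happens that the correct test with $v\equiv 1$ also yields a contradiction (one finds $\lambda_\lo\ge\tfrac{\int V''f^2}{\int f^2}=2/5>1/4$, which the paper rephrases via Remark \ref{la<v2} and the strict Cauchy--Schwarz inequality $\int f\,\int f^3>(\int f^2)^2$), so the conclusion survives, but your derivation as written does not establish it.

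The same misreading makes your 2D target wrong: the stability condition is $\int w'^2 f^2\ge 4\int w^2 f^3$ for $\int wf^2=0$, not $\int w'^2 f\ge 4\int w^2 f^2$ for $\int wf=0$. Your Sturm--Liouville heuristic — that $w=V'$ is an eigenfunction and realizes equality with constant $2$ — is tailored to the incorrect inequality (where indeed $V'$ gives equality); for the correct inequality $V'$ gives equality at the constant $6$, not $4$. Beyond that, you only sketch the Sturm--Liouville closing step; you would need to justify that the oscillation/nodal argument applies, which requires the reduction of the zero-mean constraint to odd test functions that the paper isolates as Proposition \ref{propodd}. The paper instead closes the 2D case very differently: after reducing to odd test functions, it applies the Brascamp--Lieb inequality \eqref{BrLieb} to the auxiliary log-concave measure $d\tau=30f^3\,dx$ (whose potential satisfies $g''=6f$), immediately producing the inequality with constant $6\ge4$. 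You should either correct $\mu_\pp$ and run your Sturm--Liouville argument toward the sharp constant $6$ (still extremized by $V'$), or adopt the shorter Brascamp--Lieb route.
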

\begin{proof}
The coordinate case immediately follows by Theorem \ref{stabteocoordinate} and Proposition~\ref{calcolola}.

Let us consider the non coordinate case; as stationarity is a necessary condition for stability, the only choice is $H^{N+1}_{\vv,0}$ with $\vv\in\vn{N}$.
In particular, by the symmetry of the measure and Theorem \ref{stabteononcoordinate}, it is enough to show that $H^{N+1}_{\tilde{\vv},0}$ satisfies (\ref{stab1}) and not (\ref{stab2}), so that it is stable for $N+1=2$ but it is not for $N+1\ge 3$.

\noindent{\it The planar case $N+1=2$}.
By definition (and some changes of variables) the half-space $H=H^2_{{\vv},0}$ is stable for $\lo^2$ if and only if
\begin{equation}\label{Qge0Logi}
\int_{\R}u^2(x)(-\psi''(x))\ee^{2\psi(x)}\,dx\le \frac 12 \int_{\R}u'^2(x)\ee^{2\psi(x)}\,dx ,
\end{equation}
for each $u\in C^{\infty}_0(\R)$ such that
\begin{equation}\label{zeromean}
\int_{\R}u(x)\ee^{2\psi(x)}\,dx=0.
\end{equation}
Notice that, since $\psi''(x)=-2f(x)$, we have $D^2\phi(x_1,x_2)= -2 \delta_{ij}f(x_i)$, and condition (\ref{Qge0Logi}) becomes
$$
\int_{\R}4 u^2(x)f^3(x)\,dx \le \int_{\R}u'^2(x)f^2(x)\,dx.
$$
Again we reduce the problem to compact intervals $\qn{1}{k}=[-k,k]$.
Notice that, thanks to Proposition \ref{propodd}, we are allowed to replace the zero mean condition (\ref{zeromean}) with the assumption that $u$ is odd.
Hence we define
\begin{equation}\label{lak2}
\lab^2 = \inf\left\{ \int_{\qn{1}{k}} u'^2f^2\ :\  u\in C^{\infty}_0(\qn{1}{k}), \int_{\qn{1}{k}} u^2f^3=1, \ u\text{ is odd}\right\},
\end{equation}
and the stability condition for $H^N_{\tilde{\vv},0}$ follows if we prove
\begin{equation}\label{aim}
\lab^2\ge 4.    
\end{equation}
Notice (\ref{Qge0Logi}) is of the same type as the Brascamp and Lieb inequality (\ref{BrLieb}), with the weight function on the opposite side. The idea is then to find an appropriate probability measure $\ee^{-g}$ such that the weight can be, in some sense, reversed. Consider
$$
d\tau(x)=30 f^3(x)dx=  30 \ee^{-g(x)}dx,
$$
where $g(x)=3\ln(\frac{(1+\ee^x)^2}{\ee^x})$. Hence  $g''(x)=3\frac{2\ee^x}{(1+\ee^x)^2}= 6 f(x)$ and
then(\ref{BrLieb}) gives that for every $u\in C^1$, $u$ odd and $\var{\tau}(u)<\infty$ it holds:
$$
\int_{\R} u^2(x) \,f^3(x)\,dx \le \frac 16 \int_{\R} u'^2(x)\frac 1{f(x)}\, f^3(x)\,dx.
$$
This implies that for every compact interval $\qn{1}{k}\subseteq \R$ and for every odd function $u\in C^{\infty}_0(\qn{1}{k})$,
\begin{equation*}
\frac{\int_{\qn{1}{k}} u'^2(x)f^2(x)\,dx}{\int_{\qn{1}{k}} u^2(x)f^3(x)\,dx}\ge 6,
\end{equation*}
which entails $\lab^2\ge 6$, for every $k\in\R$ and hence (\ref{aim}) holds.

\noindent{\it The $N+1$ dimensional case, $N+1\ge 3$.}
We are going to show that (\ref{stab2}) does not hold.
Indeed, recall that $d\lo(x)=f(x)dx=\ee^{-V(x)}dx$, with $V(x)>1$; using Remark \ref{la<v2} we get
\begin{equation}\label{stablog3}
\la_\lo\le \dfrac{\int_\R V''(x)d\lo(x)}{\int_\R d\lo(x)}.
\end{equation}
Assume (\ref{stab2}) holds true; it follows
$$
\la_\lo\ge \dfrac{\int_\R V''(x/\sqrt{2})f^2(x/\sqrt{2}) \, dx}{\int_\R f^2(x/\sqrt{2})\, dx}= \dfrac{\int_\R V''(x)f^2(x)\,dx}{\int_\R f^2(x)\,dx}.
$$
Hence by (\ref{stablog3}) we obtain
$$
\dfrac{\int_\R V''(x)f^2(x) \, dx}{\int_\R f^2(x)\, dx}\le \dfrac{\int_\R V''(x)f(x)\,dx}{\int_\R f(x)\,dx},
$$
which gives, recalling that for the logistic $V''(x)=2f(x)$,
$$
\int_\R f(x)\,dx\ \int_\R f^3(x)\, dx \le \left(\int_\R f^2(x)\,dx\right)^2.
$$
Notice that this latter does not hold true by H\"older inequality (consider the functions $f^{3/2}, f^{1/2}$; as they are not proportional the strict sign in H\"older's inequality holds), hence $H^M_{{\vv},0}$ is not stable for any $M\ge 3$.
\end{proof}
The next result asserts that in the stability condition (\ref{Qge0Logi}) we can replace the zero mean assumption (\ref{zeromean}) considering the class of odd functions.

\begin{proposition}\label{propodd}
There exists a solution to the Euler-Lagrange equation for the minimum problem
\begin{equation*}
\inf\left\{ \int_{\qn{1}{k}} u'^2f^2\ :\  u\in C^{\infty}_0(\qn{1}{k}), \int_{\qn{1}{k}} u^2f^3=1, \ \int_{\qn{1}{k}}u f^2 =0\right\},
\end{equation*}
which is odd.   
\end{proposition}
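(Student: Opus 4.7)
The plan is to exploit the reflection symmetry $x\mapsto -x$ of the problem, which is inherent because the logistic density $f(x)=e^x/(1+e^x)^2=1/(4\cosh^2(x/2))$ is even, and consequently so are $f^2$ and $f^3$. First I would produce a minimizer $u$ of the constrained variational problem by the direct method of the calculus of variations, following exactly the scheme used for the existence part of Proposition~\ref{calcolola}: on the compact interval $\qn{1}{k}$ the weights are bounded between positive constants, so weak lower semicontinuity of the quadratic energy $\int u'^2 f^2$ on $H^1(\qn{1}{k})$ together with Rellich's compact embedding yields a minimizer. Such a minimizer satisfies the Euler-Lagrange equation
\begin{equation*}
-(f^2u')' = \lambda f^3 u + \sigma f^2 \quad\text{in }(-k,k), \qquad u'(\pm k)=0,
\end{equation*}
with Lagrange multipliers $\lambda$ associated with the normalization $\int u^2 f^3 = 1$ and $\sigma$ associated with the zero-mean condition $\int u f^2 = 0$.

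Next I would decompose $u = u_e + u_o$ into even and odd parts and exploit the evenness of $f^2,f^3$: since $f^2 u_o'$ is even, $(f^2 u_o')'$ is odd, and similarly $(f^2 u_e')'$ is even. Matching parities in the EL makes the equation decouple as
\begin{equation*}
-(f^2u_o')' = \lambda f^3 u_o, \qquad -(f^2u_e')' = \lambda f^3 u_e + \sigma f^2,
\end{equation*}
each still with Neumann conditions at $\pm k$. The odd equation is a homogeneous Sturm--Liouville eigenvalue problem, so multiplying by $u_o$ and integrating by parts gives $\int (u_o')^2 f^2 = \lambda \int u_o^2 f^3$. Hence, provided $u_o \not\equiv 0$, the normalized function $u_o/\bigl(\int u_o^2 f^3\bigr)^{1/2}$ is an admissible odd competitor (zero mean against $f^2$ being automatic for an odd function) whose Rayleigh quotient equals the optimal value $\lambda$; this is the required odd solution of the EL.

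The main obstacle is the degenerate case in which $u_o \equiv 0$ for every minimizer, that is, the infimum is attained only by even functions. To produce an odd solution of the EL in that case, I would invoke Sturm--Liouville theory for the homogeneous operator $Lu := -(f^2 u')'/f^3$ on $\qn{1}{k}$ with Neumann boundary conditions, restricted to the closed subspace of odd $H^1$-functions: this restricted operator is self-adjoint, strictly positive, and has compact resolvent, hence admits a first eigenfunction $\psi$, odd by construction and solving $-(f^2\psi')' = \mu f^3\psi$ with Neumann conditions, automatically satisfying $\int\psi f^2 = 0$. Thus $\psi$ is a solution of the original EL with $\sigma = 0$. The substantive content of the proposition—that $\psi$ in fact realizes the same infimum as the even minimizer—is what I expect to be the most delicate point; one would typically establish this for the logistic measure using its specific spectral structure, for instance via the substitution $w(x)=u(x)/\cosh(x/2)$ employed in Proposition~\ref{calcolola}, combined with a Courant nodal argument identifying the first non-constant eigenfunction of $L$ as odd.
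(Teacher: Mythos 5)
Your parity decomposition $u = u_e + u_o$ and the observation that, when $u_o\not\equiv 0$, the normalized odd part is itself an admissible odd minimizer (since multiplying the decoupled equation $-(f^2u_o')'=\lambda f^3 u_o$ by $u_o$ and integrating shows its Rayleigh quotient is exactly $\lambda=\lab^2$) is sound, and it is essentially the same mechanism the paper uses by considering $w(y)=u(y)-u(-y)$ and noting that $w$ solves the homogeneous Euler--Lagrange equation. So far you and the paper agree.

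The gap is in the case $u_o\equiv 0$, which you explicitly flag but do not close. Your fallback — constructing an odd eigenfunction $\psi$ of the operator restricted to odd functions — produces \emph{some} odd solution of an Euler--Lagrange equation, but with its own eigenvalue $\mu$; you acknowledge that showing $\mu$ equals $\lab^2$ is "the most delicate point," and indeed that is precisely the content of the proposition (its role in the paper is to let one replace the constraint $\int u f^2=0$ by "$u$ odd" without changing the infimum). The paper closes this gap by a different route: after the change of variable $t=\int_0^y h(s)^{-1}ds$ with $h=1/(f^2(-\psi''))$, the Euler--Lagrange equation becomes a regular Sturm--Liouville problem, and the zero-counting theorem (Hartman, Thm.~4.1) forces the minimizer to have a \emph{unique} zero in $(-k,k)$. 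An even function with a unique zero and $\int u f^2=0$ is impossible: the zero would have to sit at the origin, so $u$ keeps a fixed sign on $(0,k)$ and then $\int_{-k}^k u f^2=2\int_0^k u f^2\ne 0$. Hence the even case simply cannot occur, and $u_o\ne 0$ always. Your closing sentence ("Courant nodal argument") points toward this idea, but it is left as a hope rather than an argument, so the proof as written has a genuine gap exactly where the paper does its real work.
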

\begin{proof}
Consider the Euler-Lagrange equation for (\ref{lak2}):
\begin{equation}\label{ppb}
\begin{cases}
& (u'f^2)'+\la u(-\psi'')f^2=0\\
& u'(\pm k)=0,  
\end{cases}
\end{equation}
and perform the following change of variable
$$
dt=\frac{dy}{h(y)},\qquad\text{that is }\qquad t=\int_0^y\frac 1{h(s)}\,ds,
$$      
where $y=y(t)$.
Hence,
$$
\dfrac{d}{dy} = \dfrac 1{h(y(t))}\dfrac{d}{dt}\ ,
$$
and Problem (\ref{ppb}) becomes
\begin{equation}\label{ppb2}
\begin{cases}
& \dfrac 1{h}\Big( u_t\dfrac{f^2}{h} \Big)_t +\la u (-\psi'')f^2=0\\
& u_t(\pm\tilde{k})=0,
\end{cases}
\end{equation}
where $\tilde{k}=\int_{0}^k\frac 1{h(s)}\,ds$.
Choosing $h(y)= \frac 1{f^2(-\psi'')}$, we get
$$
\Big( u_t\frac{f^2}{h}\Big)_t +\la u =0,
$$
which is, together with the boundary conditions, a Sturm-Liouville problem of the type considered in \cite{H}.

Notice that, as $h(y)>0$, problem (\ref{ppb2}) is equivalent to problem (\ref{ppb}).
By Theorem 4.1 in \cite{H}, we have that the first eigenfunction $u\big(y(t)\big)$, corresponding to the first eigenvalue $\la=\lab^2$, has a unique zero in $(-\tilde{k},\tilde{k})$.
This implies that $u(y)$ has a unique zero in $(-k,k)$.

Consider again problem (\ref{ppb}); notice that $w(y)=u(y)-u(-y)$ solve it.
Moreover, thanks to condition (\ref{zeromean}),  $u$ has zero mean in $(-k,k)$ (w.r.t. the measure $f^2(y)\,dy$), and, as noticed above, it has a unique zero in $(-k,k)$.
This implies that $u$ can not be even, and hence $w$ is not constant null in $(-k,k)$.
We can then assume the solution $u$ to be odd.
\end{proof}

\subsection{Perturbations of Gaussian measures}

In this part we present heuristic evidence for the existence of non-Gaussian probability distributions for which non-coordinate hyperplanes are stable for their product measures in any dimension. The tools of perturbation theory
would certainly allow to turn the following sketch into a rigorous argument.

Since for the standard Gaussian measure all hyperplanes are stable, it is natural to look for perturbations  of it.
We look for a measure $d\tau=\ee^{-v(x)}dx$ on $\R$ such that conditions \emph{(\ref{p1})} and \emph{(\ref{p2})} in Theorem \ref{teoPoincpeso} hold for $d\tau$, $d\nu(x)=\ee^{-2v(x/\sqrt{2})}$, $\Theta(x)=v''(x/\sqrt{2})$.
According to the results of the previous sections, this is equivalent to construct a measure $\tau$ such that the half-space through the origin, orthogonal to $(0,...,0,\pm 1/\sqrt{2},\pm 1/\sqrt{2})$ is (stationary and) stable for $\tau^{N+1}$ for every $(N+1)\ge 3$. As we said, we choose $\tau=\te$ as a perturbation of the Gaussian measure, i.e.
$$
d\te=\ee^{-(\frac{x^2}{2}+\ep\psi(x))}dx=\ee^{-v_\ep(x)}dx,
$$
where $\psi\in C^{\infty}_0(\R)$ will be suitably determined later on. In order to preserve symmetry, we choose $\psi$ even. As $\psi$ has compact support, for sufficiently large $x$, $\te$ coincides with the (not normalized) Gaussian measure for every $\ep>0$ and hence it satisfies an \isopi of the type (\ref{isoGeneric}).
Moreover, up to normalization, $\tau_0$ coincides with the Gaussian measure. For $\ep>0$ let us define:
\begin{eqnarray}
&& \la(\ep)=\inf\left\{  \frac{\int_{\R}u'^2(x)d\te(x)}{\int_{\R}u^2(x)d\te(x)}\ :\ \int_{\R}ud\te=0,\ u\in \wud{\te}(\R) \right\}, \label{laep}\\
&& k(\ep)=\inf\left\{  \frac{\int_{\R}u'^2(x)d\nne(x)}{\int_{\R}u^2(x) v''_\ep({\textstyle \frac x{\sqrt{2}}})d\nne(x)}\ :\ \int_{\R}ud\nne=0,\ u\in \wud{\te}(\R) \right\}, \label{kep}\\
&& a(\ep)=\sup\left\{ \frac{ \int_{\R} u^2(y)v''_\ep({ \textstyle \frac y{\sqrt{2}} })d\nne(y) - \int_{\R} u'^2(y)d\nne(y)}{\int_\R u^2(y)d\nne(y)}\ :\ u\in \wud{\te}(\R), u\not\equiv 0  \right\},\label{aep}
\end{eqnarray}
where $v_\ep(x)=\frac{x^2}{2}+\ep\psi(x)$, $d\nne(x)=\ee^{-2v_\ep(x/\sqrt{2})}$.
Here we assume, in particular, that $\ep>0$ is sufficiently small so that $v''_\ep>0$ in $\R$ (recall that $\psi$ has compact support).
Moreover we assume the infima and supremum in (\ref{laep})-(\ref{aep}) to be achieved and that the following derivatives exist:
$$
\dot{\la}=\frac{d}{d\ep} \la(\ep)|_{\ep=0},\qquad \dot{k}=\frac{d}{d\ep} k(\ep)|_{\ep=0},\qquad \dot{a}=\frac{d}{d\ep} a(\ep)|_{\ep=0}.
$$
Note that $\la(0)$ is the spectral gap of the Gaussian measure, hence $\la(0)=1$.
Analogously $k(0)=1$. For $a(0)$ we have
$$
a(0)=\sup\left\{ 1-\frac{\int_\R u'^2 d\nu_0}{\int_\R u^2 d\nu_0}\ :\ u\in \wud{\te}(\R), u\not\equiv 0  \right\}=1.
$$
As a first consequence, if $\dot{k}>0$ then condition (\ref{p1}) in Theorem \ref{teoPoincpeso} is verified for $\ep>0$ small enough. If moreover $\dot{\la}>\dot{a}$, then condition (\ref{p2}) also hold for sufficiently small $\ep>0$.
In what follow we compute $\dot{\la}$, $\dot{k}$ and $\dot{a}$ in order to find a function $\psi\in C^{\infty}_0(\R)$ such that
$$
\dot{k}>0,\qquad \dot{\la}>\dot{a}.
$$
We start with the analysis of $\dot{\la}$; the corresponding calculations for $\dot{k}$, $\dot{a}$ are analogous and we omit them.

The Euler-Lagrange equation of the minimum problem (\ref{laep}) is
$$
u''_\ep-(x+\ep\psi')u'_\ep+\la(\ep)u_\ep = 0,
$$
verified by a minimizer $u_\ep$.
We differentiate it with respect to $\ep$ the equation at $\ep=0$, writing $u$ for $u_0$, $\la$ for $\la_0=1$ and $\dot{u}$ for the derivative of $u$ with respect to $\ep$ at $\ep=0$. We get
\begin{equation}\label{upunto}
\dot{u}''-x \dot{u}'-\psi'u'+\dot{\la}u+\la\dot{u}=L\dot{u}-\psi'u'+\dot{\la}u+\la\dot{u}=0,
\end{equation}
where the operator $L$ is defined by $Lw=w''-xw'$.
Note that $L$ is self-adjoint with respect to $\tau=\tau_0$:
$$
\int_\R w Lvd\tau=\int_\R vLwd\tau, \qquad\text{ for every }w,v.
$$
Integrating equation (\ref{upunto}) against $ud\tau$ we get
$$
\int_\R uL\dot{u}d\tau -\int_\R\psi'uu'd\tau+\dot{\la}\int_\R u^2d\tau+\la\int_\R u\dot{u}d\tau=0.
$$
On the other hand
$$
\int_\R uL\dot{u}d\tau +\la \int_\R u\dot{u}d\tau = \int_\R\dot{u}(Lu+\la u)d\tau=0,
$$
as $Lu+\la u =0$.
Hence
$$
\dot{\la} \int_\R u^2 d\tau = \int_\R \psi' uu'd\tau.
$$

We recall that $u=u_0=x$ and we introduce the normalized Gaussian measure $d\gamma(x)=\frac 1{\sqrt{2\pi}} \ee^{-x^2/2}\,dx$.
We get
\begin{eqnarray*}
\dot{\la}&=&\int_\R \psi'(x)\;u(x)\,u'(x),d\gamma(x)=\int_\R \psi'\; \Big(\frac{u^2}2\Big)' d\gamma=-\int_\R \psi\; L(u^2/2)\,d\gamma\\
         &=&\int_\R \psi(x) (x^2-1) d\gamma(x)=\frac 1{\sqrt{2\pi}}\int_\R \psi(x)(x^2-1)\ee^{-\frac{x^2}2}dx.
\end{eqnarray*}
In a similar way we can compute $\dot{k}$, $\dot{a}$, obtaining
\begin{eqnarray*}
\dot{k}&=& 2\int_\R \psi(x/\sqrt{2}) (-x^4+6x^2-3)d\gamma(x)=\frac{2}{\sqrt{\pi}}\int_\R \psi(x)(-4x^4+12x^2-3)\ee^{-x^2}dx ,\\
\dot{a}&=& 2\int_\R \psi(x/\sqrt{2})(x^2-1)d\gamma(x)=\frac 2{\sqrt{\pi}}\int_\R \psi(x)(2x^2-1)\ee^{-x^2}dx.
\end{eqnarray*}

Hence the half-space $H^{N+1}_{\vv,0}$ is stable for the measure $\te$, for $\ep$ sufficiently small, if and only if there exists a function $\psi\in C^{\infty}_0(\R)$ such that
\begin{eqnarray*}
 \int_\R \psi(x)(-4x^4+12x^2-3)\ee^{-x^2}dx &>& 0,\\
  \frac 1{\sqrt{2}}\int_\R \psi(x)(x^2-1)\ee^{-\frac{x^2}2}dx &>& 2\int_\R \psi(x)(2x^2-1)\ee^{-x^2}dx.
\end{eqnarray*}
As $(-4x^4+12x^2-3)\ee^{-x^2}$ and $(x^2-1)\ee^{-\frac{x^2}2}-2\sqrt{2}(2x^2-1)\ee^{-x^2}$ are linearly independent functions in $L^2(\R)$, there exists $\psi\in C^{\infty}_0(\R)$ such that previous inequalities hold, and hence $H^{N+1}_{\vv,0}$ is stable for $\te$, for any $N+1\ge 2$.

\section*{Acknowledgements}
This work began while the second author was supported by the RTN European Network ``Phenomena in High Dimensions'' and hosted by the Laboratoire de Statistique et Probabilit\'es at the Institut de Math\'ematiques, Universit\'e Paul-Sabatier of Toulouse (France).
She would like to thank Pawel Wolff for many helpful discussions and for his encouragement during her sojourn.
 
\end{document}